%
%
%

\documentclass[11pt]{article}
\usepackage[a4paper]{anysize}\marginsize{3.5cm}{3.5cm}{1.3cm}{2.5cm}
\usepackage[latin1]{inputenc}

\pdfpagewidth=\paperwidth \pdfpageheight=\paperheight
\usepackage{amssymb}

\pagestyle{myheadings}
\thispagestyle{empty}


\makeatletter
\renewcommand\@seccntformat[1]{\csname the#1\endcsname.\enspace}
\renewcommand\@begintheorem[2]{\trivlist\item[\hskip\labelsep{\bfseries#1 #2.}]\it}
\renewcommand\@opargbegintheorem[3]{\trivlist\item[\hskip\labelsep{\bfseries#1 #2}] {\bfseries(#3).}\enspace\it\ignorespaces}
\makeatother
\renewenvironment{abstract}{\begin{quote}\hrulefill\par\footnotesize\textbf{\abstractname.}}{\par\vskip-0.5\baselineskip\hrulefill\end{quote}}


\newtheorem{introtheorem}{Theorem}  

\newtheorem{thm}{Theorem}[section]

\newtheorem{lemma}[thm]{Lemma}
\newtheorem{proposition}[thm]{Proposition}
\newtheorem{corollary}[thm]{Corollary}

\newcommand\mkthm[2]{\newenvironment{#1}{\begin{#2}\rm}{\end{#2}}}
 \mkthm{definition}{tdefinition}
         \mkthm{remark}{tremark}
       \mkthm{example}{texample}
     \mkthm{question}{tquestion}

\newenvironment{proof}[1][Proof]{\trivlist\item[\hskip\labelsep{\textit{#1.}}]}{\hspace*{\fill}$\Box$\endtrivlist}



\newcommand\grant[1]{{\renewcommand\thefootnote{}\footnotetext{#1.}}}
\newcommand\keywords[1]{{\renewcommand\thefootnote{}\footnotetext{\textit{Keywords:} #1.}}}
\newcommand\subclass[1]{{\renewcommand\thefootnote{}\footnotetext{\textit{Mathematics Subject Classification (2010):} #1.}}}


\newcommand\C{\mathbb C}
\newcommand\Q{\mathbb Q}
\newcommand\R{\mathbb R}
\newcommand\Z{\mathbb Z}
\newcommand\N{\textnormal{N}}

\newcommand\be[1][@{\;}r@{\;}c@{\;}l@{\;}l@{\;}]{$$\everymath{\displaystyle}\renewcommand\arraystretch{1.2}\begin{array}{#1}}
\newcommand\ee{\end{array}$$}

\newcommand\compact{\itemsep=0cm \parskip=0cm}

\newcommand\matr[1]{\left(\begin{array}{*{20}{c}} #1 \end{array}\right)}



\newcommand\newop[2]{\newcommand#1{\mathop{\rm #2}\nolimits}}


\newop\End{End}
\newop\Fix{Fix}
\newop\GL{GL}
\newop\SL{SL}
\newop\fix{\#\Fix}  
\newop\tr{tr}
\newop\id{id}



\begin{document}

   \title{Fixed points and entropy of endomorphisms \\
      on simple abelian varieties}
   \author{\normalsize Thorsten Herrig}
   \date{\normalsize \today}
   \maketitle
   \thispagestyle{empty}
   \grant{The author was supported by Studienstiftung des deutschen Volkes}
   \keywords{abelian variety, endomorphism, fixed point, entropy}
   \subclass{14A10, 14K22, 14J50, 37B40}

\begin{abstract}
   In this paper we investigate fixed-point numbers and entropies of
   endomorphisms on abelian varieties. It was shown quite recently that the number of fixed-points of an iterated endomorphism on a simple complex torus is either periodic or grows exponentially. Criteria to decide whether a given endomorphism is of the one type or the other are still missing. Our first result provides such criteria for simple abelian varieties in terms of the possible types of endomorphism algebras. The number of fixed-points depends on the eigenvalues and we exactly show which analytic eigenvalues occur. This insight is also the starting point to ask for the entropy of an endomorphism. Our second result offers criteria for an endomorphism to be of zero or positive entropy. The entropy is computed as the logarithm of a real number and our third result characterizes the algebraic structure of this number.
  \end{abstract}


\section*{Introduction}

In the present paper we study the asymptotic behaviour of the number of fixed-points of iterates of a holomorphic map of an abelian variety.




The growth of the fixed-points function has been studied on two-dimensional complex tori in~\cite{BH:fp-two-dim-complex-tori}, where
a complete classification for this case can be found. In that paper it was shown that the fixed-point function grows either exponentially or periodic or a combination of both. This result was recently extended to higher dimensions by Alvarado and Auffarth \cite{AA:fp-tori}, who showed that the fixed-points function is still of one of these three types.

In this paper we extend the second theorem of~\cite{BH:fp-two-dim-complex-tori}, which deals with simple abelian surfaces, to higher dimensions. By~\cite{AA:fp-tori} we know that endomorphisms on simple abelian varieties lead to a fixed-points function which grows either periodic or exponentially. Now it is desirable to know about the exact behaviour of this function in terms of the possible types of  endomorphism algebras.

Our first result contains this information for the cases that $\End_\Q(X)$ is a totally real number field, a totally definite quaternion algebra or a CM-field. We further classify the analytic eigenvalues with respect to the fixed-point behaviour. In the case of multiplication by a totally indefinite quaternion algebra we show that, surprisingly, eigenvalues of absolute value $1$ occur that are not roots of unity -- this is in contrast with the surface case:

\begin{introtheorem}\label{thm:abelian}
 \begin{itemize}
\item[\rm(1)] Let $X$ be a simple abelian variety with $\End_{\mathbb{Q}}(X)$ isomorphic to a totally real number field, a definite quaternion algebra or a CM-field.
   Then for an endomorphism $f$ we have:
   \begin{itemize}
   \item[\rm(a)]
      Suppose that $X$ has real multiplication, i.e.,
      $\End_\Q(X)$ is a totally real number field.
      Then $\fix(f^n)$ is periodic if
      $f=\pm\id_X$, and it grows exponentially otherwise.
   \item[\rm(b)]
      Suppose that $X$ has definite quaternion multiplication,
      i.e., $\End_\Q(X)$ is of the form $F+iF+jF+ijF$ with $i^2=\alpha\in F\setminus\{0\}$, $j^2=\beta\in F\setminus\{0\}$ and $ij=-ji$, where $F$ is a totally real number field and $\End_\Q(X)\otimes_\sigma\R\simeq \mathbb{H}$ holds for every embedding $\sigma: F\hookrightarrow \R$. Write $f\in\End(X)$ as
      $f=a+bi+cj+dij$ with $a,b,c,d\in F$. Then $\fix(f^n)$ is
      periodic if
      $|a+\sqrt{b^2\alpha+c^2\beta-d^2\alpha\beta}|=1$, and
      it grows exponentially otherwise. 
   \item[\rm(c)]
      Suppose that $X$ has complex multiplication, i.e., $\End_\Q(X)$ is a CM-field.
      Then $f$ has periodic fixed-point
      behaviour if $|f|=1$, and it has
      exponential fixed-points growth otherwise.
   \end{itemize}
Moreover, if $\fix(f^n)$ is periodic, then all analytic eigenvalues are roots of unity, and if $\fix(f^n)$ grows exponentially, then they are all of absolute value $\neq 1$.
\item[\rm(2)] There exist simple abelian varieties with totally indefinite quaternion multiplication possessing endomorphisms with analytic eigenvalues of absolute value $1$ which are not roots of unity.
\end{itemize}
\end{introtheorem}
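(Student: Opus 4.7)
The plan is to realise the example over a real quadratic base field, where the reduced trace of an element of the quaternion algebra takes values in a rank-two lattice and therefore has more room than in the surface case. Fix $F$ a totally real number field of degree $e\ge 2$ (it will suffice to take $F=\Q(\sqrt 2)$, $e=2$), a totally indefinite quaternion division algebra $D$ over $F$, and a maximal order $\mathcal O\subset D$; by Shimura's construction there exists a simple abelian variety $X$ of dimension $2e$ with $\End_\Q(X)\simeq D$ and $\End(X)\simeq\mathcal O$. Under the isomorphism $D\otimes_\Q\R\simeq M_2(\R)^e$, every $f\in D$ becomes a tuple $(f^{\sigma_1},\dots,f^{\sigma_e})$, and the analytic representation of $D$ on the tangent space of $X$ is the direct sum, over the embeddings $\sigma_i$ of $F$, of the standard two-dimensional representations of the $M_2(\R)$-factors. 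Consequently, the $2e$ analytic eigenvalues of $f$ are precisely the eigenvalues of the real $2\times 2$ matrices $f^{\sigma_i}$.

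With this dictionary I look for $f\in\mathcal O$ with reduced norm $\mathrm{Nrd}(f)=1$ whose reduced trace $t=\mathrm{Trd}(f)\in\mathcal O_F$ satisfies $|t^{\sigma_1}|<2$ and $|t^{\sigma_2}|>2$. Then $f^{\sigma_1}\in\SL_2(\R)$ is elliptic with eigenvalues $e^{\pm i\theta}$, where $2\cos\theta=t^{\sigma_1}$, and these have absolute value one by construction. If $e^{i\theta}$ were a root of unity $\zeta$, then $t^{\sigma_1}=\zeta+\zeta^{-1}$ would be an algebraic integer all of whose Galois conjugates lie in $[-2,2]$; but $t^{\sigma_2}\notin[-2,2]$ is a Galois conjugate of $t^{\sigma_1}$, contradiction. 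Hence the eigenvalues $e^{\pm i\theta}$ of the corresponding endomorphism of $X$ lie on the unit circle but are not roots of unity.

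The main obstacle is producing such an $f$. My preferred abstract route uses that the group $\mathcal O^1$ of reduced-norm-one units embeds as an irreducible cocompact arithmetic lattice $\Gamma\subset\SL_2(\R)^e$ (irreducible because $D$ is a division algebra and $e\ge 2$); the projection of $\Gamma$ to any single factor is therefore dense in that factor, so $\Gamma$ contains many elements $\gamma$ for which $\gamma^{\sigma_1}$ is elliptic but not of finite order, and the Kronecker-type argument of the previous paragraph then forces $\gamma^{\sigma_2}$ to be hyperbolic automatically. A fully explicit alternative is to prescribe $t=1-\sqrt 2\in\mathcal O_F$ over $F=\Q(\sqrt 2)$, whose two conjugates $1\mp\sqrt 2$ already sit on opposite sides of $[-2,2]$, and to apply Eichler's theorem on optimal embeddings of the quadratic $\mathcal O_F$-order $\mathcal O_F[X]/(X^2-tX+1)$ into a maximal order of a suitably ramified totally indefinite $D$ in order to realise $t$ as the reduced trace of a reduced-norm-one element.
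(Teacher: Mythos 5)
Your proposal addresses only part (2) of the theorem. The bulk of the statement -- the criteria in (1)(a)--(c) for real, totally definite quaternion and CM multiplication, together with the ``moreover'' clause that in the periodic case all analytic eigenvalues are roots of unity and in the exponential case none has absolute value $1$ -- is not touched at all. The paper establishes exactly this part by playing the two versions of the Holomorphic Lefschetz fixed-point formula against each other: comparing $\prod_i(k-\lambda_i)(k-\bar\lambda_i)=\fix(f-k+1)$ with the norm expression $\big(\prod_j(k-\sigma_j(f))\big)^{m\cdot 2g/e}$ (resp.\ with the conjugates of the roots $a\pm\sqrt{b^2\alpha+c^2\beta-d^2\alpha\beta}$ of the reduced characteristic polynomial in the quaternion case) for every integer $k$, which forces each analytic eigenvalue to be a Galois conjugate of $f$ (resp.\ of $t_{1/2}$); one then needs that subfields of totally real fields are totally real, that subfields and normal closures of CM-fields are totally real or CM, that $F(\sqrt{t})$ is a CM-field in the totally definite case, and Daileda's theorem that an algebraic integer of absolute value $1$ lying in a CM-field is a root of unity. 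Without some argument of this kind you have no proof that ``$|f|=1$'' or ``$|a+\sqrt{b^2\alpha+c^2\beta-d^2\alpha\beta}|=1$'' is the correct dividing line, so the theorem as stated is not proved.

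For part (2) itself your route is viable and genuinely different from the paper's, which exhibits the explicit algebra $\Big(\frac{2,\,-2-2\sqrt{13}}{\Q(\sqrt{13})}\Big)$, the order $S\oplus Sj$ and the element $\frac14(1-\sqrt{13}+i)$ whose reduced characteristic polynomial has a root of absolute value $1$ conjugate to a Salem number, and then again identifies analytic eigenvalues via the two fixed-point formulas. Your $\SL_2(\R)^e$ picture (trace in $(-2,2)$ at one real place, outside $[-2,2]$ at another, Kronecker-type contradiction) is clean, but two steps need justification. First, the dictionary ``the $2e$ analytic eigenvalues of $f$ are the eigenvalues of the matrices $f^{\sigma_i}$'' is asserted, not proved; it does hold (since $H_1(X,\Q)$ is free of rank one over $D$, $\rho_r\otimes\C\simeq\rho_a\oplus\bar\rho_a$, and each standard $2$-dimensional factor representation is self-conjugate, so occurs exactly once in $\rho_a$), but some such argument -- or the paper's formula comparison -- must be supplied. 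Second, the existence of the element is left vague on both routes: in the density argument you must still rule out that the elliptic elements you find are torsion (e.g.\ by noting a cocompact lattice has only finitely many torsion trace values and choosing the trace to avoid them), and in the explicit route invoking ``Eichler's theorem'' you must check that a totally indefinite \emph{division} algebra over $\Q(\sqrt2)$ exists into which $K=F[x]/(x^2-(1-\sqrt2)x+1)$ embeds -- i.e.\ choose a nonempty even set of finite primes of $F$ non-split in $K$ as the ramification set; once $K\hookrightarrow D$ is known, no optimal-embedding theorem is needed, since $\mathcal O_F[f]$ lies in some maximal order. Finally, simplicity of $X$ and $\End_\Q(X)\simeq D$ hold for a suitable (general) member of the Shimura family and should be stated as such. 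With these repairs your construction would replace the paper's Lemma on the $\Q(\sqrt{13})$ example and Proposition on totally indefinite multiplication, but parts (1)(a)--(c) still require a separate proof.
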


One can rephrase the first part of this theorem in the following way, using the Rosati involution: If $f\cdot f'=1$ holds, then $\fix(f^n)$ is periodic, and it grows exponentially otherwise.

Theorem \ref{thm:abelian} provides insight into the analytic eigenvalues of a given endomorphism, hence by~\cite{Gromov:entropy-holom}, \cite{Fr:entropy} and \cite{Youndin} we can compute its entropy via the formula  $$\max_{1\leq j\leq\dim X}\log\rho(f^*:\textnormal{H}^{j,j}(X)\rightarrow \textnormal{H}^{j,j}(X)),$$ where $\rho$ stands for the spectral radius.
In recent years the investigation of entropies on algebraic surfaces has attracted a lot of attention, and it is a leading question to ask whether an automorphism is of zero or positive entropy. 
Results concerning K3-surfaces (\cite{McMullen:entropy} and \cite{McMullen:dynamics}), two-dimensional complex tori \cite{Reschke:tori} and abelian surfaces \cite{Reschke:AV} show that the entropy is the logarithm of a Salem number in the case of positive entropy.

Our second result answers the question of zero or positive entropy in terms of the same types of endomorphism algebras as in Theorem~\ref{thm:abelian}. We further show that, differing from the other cases, there exist simple abelian varieties with totally indefinite quaternion multiplication which possess automorphisms whose entropy is the logarithm of a Salem number (We require that a Salem polynomial has at least one root of absolute value one (cf.~\cite{Salem}).):

\begin{introtheorem}\label{thm:entropy} 
\begin{itemize}
\item[\rm(1)] Let $X$ be a simple abelian variety with $\End_{\mathbb{Q}}(X)$ isomorphic to a totally real number field, a definite quaternion algebra or a CM-field. Then for an endomorphism $f$ we have:
\begin{itemize}
\item[\rm(a)] Suppose that $X$ has real or complex multiplication: If $f$ is of absolute value $1$, then $f$ has zero entropy, and positive entropy otherwise.
\item[\rm(b)] Suppose that $X$ has totally definite quaternion multiplication $\Big(\frac{\alpha,\beta}{F}\Big)$ and let $f$ be of the form $a+bi+cj+dij$: If $|a+\sqrt{b^2\alpha+c^2\beta-d^2\alpha\beta}|$ is $1$, then $f$ has zero entropy, and positive entropy otherwise.
\end{itemize}
\item[\rm(2)] There exist simple abelian varieties with totally indefinite quaternion multiplication possessing automorphisms of positive entropy $\log(\gamma)$, such that $\gamma$ is a Salem number.
\end{itemize}
\end{introtheorem}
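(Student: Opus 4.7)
The plan for Part~(1) is to reduce the problem to the analytic-eigenvalue picture obtained in Theorem~\ref{thm:abelian}. Using the formula $h(f)=\max_{j}\log\rho(f^*|H^{j,j}(X))$ cited immediately before the statement, and denoting by $\lambda_1,\dots,\lambda_g$ the analytic eigenvalues of $f$, one has $H^{j,j}(X)\simeq\wedge^{j}H^{1,0}(X)\otimes\wedge^{j}H^{0,1}(X)$, and the eigenvalues of $f^*$ on this piece are the products $\lambda_{i_1}\cdots\lambda_{i_j}\,\overline{\lambda_{k_1}\cdots\lambda_{k_j}}$ over strictly increasing $j$-tuples. Thus
$$\rho(f^*|H^{j,j}(X)) = \Big(\prod_{i=1}^{j}|\lambda_{(i)}|\Big)^{2}$$
with $|\lambda_{(1)}|\ge\cdots\ge|\lambda_{(g)}|$, and maximising over $j$ yields $h(f)=2\sum_{|\lambda_i|>1}\log|\lambda_i|$. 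Since $\prod_i|\lambda_i|^2=\deg(f)\ge 1$, the entropy vanishes if and only if every $|\lambda_i|=1$; by the final assertion of Theorem~\ref{thm:abelian} this is equivalent to the periodicity of $\fix(f^n)$, so the conditions in (a) and (b) of Theorem~\ref{thm:entropy}(1) are exactly the corresponding periodicity criteria of Theorem~\ref{thm:abelian}(1).

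For Part~(2), I would adapt the construction underlying Theorem~\ref{thm:abelian}(2). Fix a totally real number field $F$ of degree $e\ge 2$ and a totally indefinite quaternion division algebra $Q=\Big(\frac{\alpha,\beta}{F}\Big)$ over $F$, and let $X$ be a simple abelian variety of dimension $2e$ with $\End_\Q(X)\cong Q$, equipped with a polarization whose Rosati involution on $Q$ is chosen so that a reduced-norm-one element of a suitable order acts as an automorphism of $X$. For $f=a+bi+cj+dij\in Q$ the reduced characteristic polynomial over $F$ is $T^2-2aT+(a^2-b^2\alpha-c^2\beta+d^2\alpha\beta)$, and for each real embedding $\sigma$ of $F$ its two roots are the analytic eigenvalues of $f$ attached to $\sigma$. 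I would then pick $f$ with reduced norm $1$ and reduced trace $\tau=2a\in\mathcal{O}_F$ satisfying $|\sigma_0(\tau)|>2$ at exactly one embedding $\sigma_0$ and $|\sigma(\tau)|<2$ at every other real embedding. At $\sigma_0$ the eigenvalues form a real pair $\mu_0,\mu_0^{-1}$ with $\mu_0>1$, while at the remaining $e-1$ embeddings they form complex-conjugate pairs on the unit circle, giving $2(e-1)$ eigenvalues of absolute value~$1$. Part~(1) then yields $h(f)=2\log\mu_0=\log(\mu_0^{2})$, and $\mu_0^{2}$ is a reciprocal algebraic integer of degree~$2e$ whose only Galois conjugates off the unit circle are $\mu_0^{\pm 2}$---hence a Salem number.

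The main obstacle is arithmetic, with two moving parts. First, one must exhibit $F$ and $\tau\in\mathcal{O}_F$ with the prescribed archimedean signature and realise $\tau$ as the reduced trace of a reduced-norm-one element of a totally indefinite quaternion order; both tasks are standard but need to be arranged compatibly. Second, one must verify that $\mu_0^{2}$ really is a Salem number of degree~$2e$: the unit-circle conjugates are automatically not roots of unity, since a conjugate root of unity of $\mu_0^{2}$ would force all Galois conjugates of $\mu_0^{2}$ onto the unit circle, contradicting $\mu_0^{2}>1$; and one ensures that the minimal polynomial has full degree~$2e$ by choosing $\tau$ so that $F=\Q(\tau)$. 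On the geometric side, the simplicity of $X$ and the fact that $f$ extends to an automorphism are controlled by Shimura's classification of positive involutions on quaternion algebras together with a compatible choice of polarization.
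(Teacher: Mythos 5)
Your Part~(1) argument is essentially the paper's: the paper proves it (Corollary~\ref{cor1:entropy}) by exactly the reduction you describe, namely that Propositions~\ref{prop:realmult}, \ref{prop:CM} and \ref{prop:totdef} force all analytic eigenvalues onto the unit circle under the stated conditions and force at least one eigenvalue off the circle otherwise, so the entropy formula $\max_j\log\rho(f^*|\textnormal{H}^{j,j})$ gives zero respectively positive entropy. Your explicit description of the eigenvalues of $f^*$ on $\textnormal{H}^{j,j}(X)$ and the normalization $\prod_i|\lambda_i|^2=\deg(f)\ge 1$ is a slightly more careful version of what the paper leaves implicit; no objection there.

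For Part~(2) you take a genuinely different route. The paper does not argue by a general existence scheme: it exhibits a concrete example (Lemma~\ref{ex:totindef} and Proposition~\ref{prop:totindef}), namely $B=\bigl(\frac{2,\,-2-2\sqrt{13}}{\Q(\sqrt{13})}\bigr)$ with the order $\mathcal{O}=S\oplus Sj$ and the norm-one element $f=\frac14(1-\sqrt{13}+i)$, whose reduced characteristic polynomial is the Salem polynomial $x^4-x^3-x^2-x+1$; division-ness is checked by Magma, the eigenvalue identification is done via the two versions of the Lefschetz fixed-point formula, $f$ is an automorphism because its quaternion conjugate is its inverse inside $\mathcal{O}$, and the entropy is $\log$ of the square of the Salem root (Corollary~\ref{end}). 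Your general construction (norm-one element of trace $\tau$ with $|\sigma_0(\tau)|>2$ at exactly one real place) would indeed produce the same conclusion and is more flexible, but as written it is a plan, not a proof: the two steps you call \emph{standard} are exactly the content that must be supplied. Concretely, (i) you must actually produce the totally indefinite \emph{division} algebra $B$ together with an order containing your norm-one element; the clean way is to choose first the quadratic extension $K=F[T]/(T^2-\tau T+1)$, split at exactly one real place of $F$, take a norm-one unit of infinite order in $\mathcal{O}_K$ (Dirichlet's unit theorem gives relative rank one), and then choose $B$ ramified at two finite primes of $F$ inert in $K$, so that $K\hookrightarrow B$ by the local-global embedding theorem (the paper's Lemma~\ref{lem:indef1} is the quadratic case of this); no appeal to Shimura or to a special choice of Rosati involution is needed for the automorphism property, since a norm-one element of an order closed under conjugation is automatically a unit of the order. (ii) Your assertion that the analytic eigenvalues attached to each real embedding $\sigma$ are the two roots of the $\sigma$-twisted reduced characteristic polynomial needs justification: either quote the explicit form of the analytic representation for type~II abelian varieties of dimension $2e$, or argue as the paper does via $\rho_r\otimes 1\simeq\rho_a\oplus\overline{\rho_a}$ and the fixed-point formulas (note the paper's own Corollary~\ref{end} also tacitly uses that exactly one analytic eigenvalue lies off the unit circle, which follows from the multiplicity count in $\rho_r$). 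With these two points filled in, your argument is correct and somewhat more general than the paper's single-example proof, at the cost of replacing an explicit verifiable example by a Chebotarev/Dirichlet existence argument.
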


 As the entropy is computed via the analytic eigenvalues, we can give an exact characterization of the numbers $\gamma$, where $\log(\gamma)$ is the entropy:

\begin{introtheorem}\label{structure} Let $X$ be a simple abelian variety with $\End_{\mathbb{Q}}(X)$ isomorphic to a totally real number field, a totally definite quaternion algebra or a CM-field, and let $f$ be an endomorphism with entropy $\log(\gamma)$. Then $\gamma$ is contained in the normal closure of the maximal totally real subfield of $\End_\Q(X)$.
\end{introtheorem}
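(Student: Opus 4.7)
The plan is to reduce the claim to an analysis of the analytic eigenvalues $\lambda_1,\dots,\lambda_g$ of $f$ on $T_0X$, and to show that in each of the three cases the quantities $|\lambda_i|^2$ lie in an embedding of the maximal totally real subfield $F^+$ of $\End_\Q(X)$ into $\R$, hence in its normal closure $L$ over $\Q$.

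First I would use that the cohomology of a complex torus is generated by $H^1(X,\C)=H^{1,0}\oplus H^{0,1}$, on which $f^*$ acts with eigenvalues $\lambda_1,\dots,\lambda_g$ and $\bar\lambda_1,\dots,\bar\lambda_g$ respectively. The eigenvalues of $f^*$ on $H^{j,j}(X)$ are therefore products $\lambda_{k_1}\cdots\lambda_{k_j}\bar\lambda_{i_1}\cdots\bar\lambda_{i_j}$, and if $|\lambda_1|\ge\cdots\ge|\lambda_g|$, the spectral radius equals $\rho(f^*|H^{j,j}(X))=(|\lambda_1|\cdots|\lambda_j|)^2$. By the entropy formula recalled above, $\gamma=(|\lambda_1|\cdots|\lambda_j|)^2$ for some $j$, so it suffices to show each $|\lambda_i|^2$ lies in $L$, since $L$ is closed under products.

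Next I would treat the three cases separately, using the known structure of the analytic representation for each type of endomorphism algebra. In the real-multiplication case $\End_\Q(X)=F=F^+$, the analytic eigenvalues are the conjugates $\sigma(f)$ over the real embeddings $\sigma:F\hookrightarrow\R$; hence $|\lambda_i|^2=\sigma(f^2)\in\sigma(F)\subseteq L$. In the CM case, $\End_\Q(X)=K$ has maximal totally real subfield $K^+$, and a CM-type $\{\sigma_1,\dots,\sigma_g\}$ realises the analytic representation, so that $|\lambda_i|^2=\sigma_i(f\bar f)$ with $f\bar f\in K^+$; thus $|\lambda_i|^2\in\sigma_i(K^+)\subseteq L$. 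In the totally definite quaternion case, write $f=a+bi+cj+dij$ with $a,b,c,d\in F=F^+$; the reduced characteristic polynomial of $f$ over $F$ is $X^2-2aX+\mathrm{Nrd}(f)$ with $\mathrm{Nrd}(f)=a^2-b^2\alpha-c^2\beta+d^2\alpha\beta\in F$, and via the isomorphism $\End_\Q(X)\otimes_\sigma\R\simeq\mathbb{H}$ each real embedding $\sigma$ of $F$ contributes a pair of complex conjugate analytic eigenvalues whose common modulus squared is $\sigma(\mathrm{Nrd}(f))\in\sigma(F)\subseteq L$.

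Combining these, $\gamma$ is a product of elements of $L$, hence lies in $L$. The main conceptual obstacle is the quaternion case, since the individual analytic eigenvalues can be genuinely non-real and not themselves images of any element of $F^+$; what rescues the argument is that their modulus squared coincides with the reduced norm of $f$, which is central in the quaternion algebra and therefore belongs to $F^+$.
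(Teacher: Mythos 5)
Your proposal is correct and follows essentially the same route as the paper: reduce the entropy to a product of squared moduli $|\lambda_i|^2$ of analytic eigenvalues via the spectral radii on $\textnormal{H}^{j,j}(X)$, and then show each $|\lambda_i|^2$ lies in an embedded copy, hence in the normal closure, of the maximal totally real subfield. The only difference is cosmetic: where you compute $|\lambda_i|^2$ explicitly as $\sigma(f\bar f)$ resp.\ $\sigma(\textnormal{N}_{D/F}(f))$, the paper argues that the eigenvalues lie in CM-fields so that $\lambda_i\overline{\lambda_i}$ lands in the maximal totally real subfield --- the same fact in slightly different clothing.
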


\textit{Acknowledgements:} I would like to thank my advisor Thomas Bauer for his great support and John Voight for many valuable discussions and hints on quaternion algebras.


\section{Fixed-point formulas and endomorphism algebras}

In this section we recall fixed-point formulas, Albert's classification of the endomorphism algebra of a simple abelian variety (see~\cite[5.5.7]{BL:CAV}) and collect results about fields and quaternion algebras that will be needed in Section~\ref{sect:abelian} and~\ref{entrop}. (We include them for lack of suitable reference.)


Given an endomorphism $f$ on a $g$-dimensional simple abelian variety $X$ we have the analytic and rational representation $$\rho_a:\End(X)\to\textnormal{M}_g(\C)\quad\textnormal{and}\quad\rho_r:\End(X)\to\textnormal{M}_{2g}(\Z).$$ If we talk about analytic or rational eigenvalues, then we think of the eigenvalues of these matrices. Since we have $\rho_r\otimes 1\simeq \rho_a\oplus\overline{\rho_a}$, the Holomorphic Lefschetz Fixed-Point Formula (see~\cite{BL:fixed}) can be written as $$\fix(f^n)=\prod_{i=1}^g|1-\lambda_i^n|^2=\prod_{i=1}^g(1-\lambda_i^n)(1-\overline{\lambda_i}^n),$$ where $\lambda_1,\ldots,\lambda_g$ are the analytic eigenvalues. We immediately see that the number of fixed-points depends on the eigenvalues. To get insight into the possible eigenvalues we use the structure of the endomorphism algebra and compare the formula above with the second version of the Holomorphic Lefschetz Fixed-Point Formula (see~\cite{BL:fixed}):

Let $D=\End_\Q(X)$, $F=\textnormal{center}(D)$, $e=[F:\Q]$, $d^2=[D:F]$, and let $\textnormal{N}:D\to\Q$ be the reduced norm map. Then we have $$\fix(f)=\Big(\textnormal{N}(1-f)\Big)^{\frac{2g}{de}}.$$

Now we discuss the possible endomorphism algebras according to Albert and collect useful related results.

The first non-trivial type of $\End_\Q(X)$ is a totally real number $F$, i.e., we have $\sigma(F)\subset\R$ for all $\Q$-embeddings $\sigma: F\hookrightarrow\C$. Taking an endomorphism of such an algebra it may lie in a subfield. Then the question is whether this subfield is again totally real. The following Lemma gives an answer:

\begin{lemma}\label{lemma:subfieldtotreal} Every subfield $E$ of a totally real number field $F$ is again totally real.
\end{lemma}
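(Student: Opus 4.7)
The plan is to take an arbitrary embedding $\tau: E \hookrightarrow \C$ and show that its image lies in $\R$, which by definition means $E$ is totally real. So the whole argument reduces to producing, for each such $\tau$, an embedding of the ambient field $F$ into $\C$ that restricts to $\tau$ on $E$.

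First I would invoke the standard extension property for embeddings: since $F/E$ is a finite (in particular algebraic) extension and $\C$ is algebraically closed, any $\Q$-embedding $\tau: E \hookrightarrow \C$ extends to a $\Q$-embedding $\tilde\tau: F \hookrightarrow \C$. Concretely, one writes $F = E(\alpha)$ using the primitive element theorem (note $F/\Q$ is separable since we are in characteristic zero, so $F/E$ is too), lets $p(x) \in E[x]$ be the minimal polynomial of $\alpha$ over $E$, picks any root $\beta \in \C$ of the polynomial $p^\tau(x)$ obtained by applying $\tau$ to the coefficients, and sends $\alpha \mapsto \beta$ to define $\tilde\tau$.

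Once $\tilde\tau$ is in hand, the hypothesis that $F$ is totally real gives $\tilde\tau(F) \subset \R$, and therefore $\tau(E) = \tilde\tau(E) \subset \tilde\tau(F) \subset \R$. Since $\tau$ was arbitrary, $E$ is totally real.

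I do not foresee a real obstacle here; the only point that deserves a brief mention is the extension of embeddings, which is a routine consequence of $\C$ being algebraically closed and $F/E$ being algebraic. Everything else is immediate from the definition of "totally real".
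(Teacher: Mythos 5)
Your proof is correct, and it takes a genuinely different --- in fact more direct --- route than the paper. You fix an arbitrary embedding $\tau\colon E\hookrightarrow\C$ and extend it to an embedding $\widetilde\tau\colon F\hookrightarrow\C$ (legitimate, since $F/E$ is finite algebraic and $\C$ is algebraically closed; your explicit construction via a primitive element of $F/E$ is the standard one), and then read off $\tau(E)\subset\widetilde\tau(F)\subset\R$ from the total reality of $F$. The paper argues instead through the normal closure: writing $F=\Q(\alpha)$, it notes that the splitting field $K$ of the minimal polynomial of $\alpha$ is generated by real numbers and is Galois over $\Q$, hence totally real, and then observes that all conjugates of a primitive element of $E$ lie in $K\subset\R$. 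Your argument is shorter and isolates the only real input, the embedding-extension lemma; the paper's detour has the side benefit of establishing along the way that the normal closure of a totally real field is totally real, a fact in the same spirit as the CM-field statement of Lemma~\ref{lem:CM}(a) that is needed later. Both arguments are complete and correct.
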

\begin{proof} By the primitive element theorem we find an $\alpha$ in $F$, such that $F$ can be written as $\Q(\alpha)$. Let $\mu$ be the minimal polynomial of $\alpha$ and $K$ its splitting field which is generated by the roots of $\mu$. As all of these roots must be real by definition of $F$ and as $K$ is Galois over the rationals, the field $K$ is totally real. Further if one root of an irreducible rational polynomial lies in $K$, then all of its roots do. Finally, taking a subfield $E$ of $F$ written as $\Q(\beta)$ the primitive element $\beta$ lies in $K$ as well as all of the roots of its minimal polynomial, such that $E$ is totally real.
\end{proof}

Every algebra $\End_\Q(X)$ has a positive anti-involution, the Rosati involution with respect to a polarization. If this involution $'$ is not trivial on the center of $\End_\Q(X)$, then the pair $(\End_\Q(X),\,'\,)$ is said to be \textit{of the second kind}. Such pairs define the second non-trivial type of endomorphism algebras. We consider here only the case of CM-fields, i.e., quadratic extensions of totally real number fields which are totally imaginary.

Taking an endomorphism in an order of a CM-field $F$, our discussion about its fixed-point behaviour in Section~\ref{sect:abelian} will require knowledge about the subfields and normal closure of $F$. The following two Lemmata provide this.

\begin{lemma}\label{rem:CM} A field $F$ is totally real or a CM-field if and only if there exists an automorphism $g$ of $F$, such that $g$ coincides with the complex conjugation for every embedding $F\hookrightarrow\C$.
\end{lemma}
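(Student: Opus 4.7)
The plan is to handle the two directions separately, using the defining properties of totally real and CM-fields directly.

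For the forward direction, if $F$ is totally real, I would simply take $g=\id_F$: every embedding $\sigma:F\hookrightarrow\C$ lands inside $\R$, so complex conjugation on $\sigma(F)$ agrees with $\id$. If $F$ is a CM-field with maximal totally real subfield $F_0$, write $F=F_0(\alpha)$ with $\alpha^2\in F_0$ totally negative, and let $g$ be the non-trivial element of $\Gal(F/F_0)$, so $g(\alpha)=-\alpha$. For any embedding $\sigma$, the element $\sigma(\alpha)$ satisfies $\sigma(\alpha)^2=\sigma(\alpha^2)<0$, hence is purely imaginary; therefore complex conjugation on $\sigma(F)$ fixes $\sigma(F_0)\subset\R$ pointwise and sends $\sigma(\alpha)$ to $-\sigma(\alpha)=\sigma(g(\alpha))$. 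Since $F$ is generated by $F_0$ and $\alpha$, the automorphism $g$ realizes complex conjugation under every embedding.

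For the converse, suppose such a $g$ exists. Since complex conjugation has order at most $2$, pulling back via any embedding shows $g^2=\id_F$. If $g=\id_F$, then for every embedding $\sigma$ we have $\overline{\sigma(x)}=\sigma(x)$ for all $x\in F$, so $\sigma(F)\subset\R$ and $F$ is totally real. Otherwise $g$ has order exactly $2$; let $F_0$ be its fixed field, so $[F:F_0]=2$. For any embedding $\tau:F_0\hookrightarrow\C$ extend to $\sigma:F\hookrightarrow\C$; the hypothesis gives $\overline{\sigma(x)}=\sigma(g(x))=\sigma(x)$ for all $x\in F_0$, so $\tau(F_0)\subset\R$ and $F_0$ is totally real. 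Since $g$ is non-trivial, pick $x\in F$ with $g(x)\neq x$; then $\overline{\sigma(x)}=\sigma(g(x))\neq\sigma(x)$ for every $\sigma$, so $\sigma(F)\not\subset\R$ and $F$ is totally imaginary. Hence $F$ is a CM-field.

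The argument is essentially bookkeeping; the only subtle point is ensuring that the hypothesis on $g$ forces $g^2=\id_F$ and hence $[F:F_0]=2$ in the non-trivial case, which follows at once from the fact that complex conjugation is an involution and from picking any single embedding (such embeddings exist since $F$ is a number field).
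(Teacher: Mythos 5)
Your proof is correct and follows essentially the same route as the paper: in the forward direction take $g=\id$ or the non-trivial element of $\mathrm{Gal}(F/F_0)$ over the maximal totally real subfield, and in the converse direction pass to the fixed field of $g$, which is totally real, with $F$ either equal to it or a totally imaginary quadratic extension. You merely spell out details the paper leaves implicit (writing $F=F_0(\alpha)$ with $\alpha^2$ totally negative, and checking $g^2=\id$ so that $[F:F_0]=2$), which is fine.
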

\begin{proof} Assume first that $F$ is totally real or a CM-field. Let $E$ be the maximal totally real subfield of $F$. For $E=F$ the assertion follows with $g=\id$. Otherwise, $g$ is the non-trivial element of Gal$(F/E)$.

Conversely, consider $E$ to be the fixed field of such an automorphism $g$. As $E$ must be totally real, the field $F$ either coincides with $E$ or is a totally imaginary quadratic extension of $E$.
\end{proof}

\begin{lemma}\label{lem:CM}
\begin{itemize}
\item[\rm(a)] The normal closure of a CM-field is again a CM-field.
\item[\rm(b)] Every subfield of a CM-field is either a totally real number field or a CM-field.
\end{itemize}
\end{lemma}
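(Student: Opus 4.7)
The plan is to use Lemma~\ref{rem:CM} as the main tool throughout: a number field is totally real or a CM-field if and only if it admits an automorphism that agrees with complex conjugation under every complex embedding. Producing, or restricting to, such an automorphism is the common thread in both parts, and the work in each case is to manufacture that automorphism from the one already present on $F$.

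For part (a), I fix an embedding of the normal closure $L$ of $F$ into $\C$ and write $\sigma_1,\ldots,\sigma_n$ for the embeddings of $F$ into $\C$, so that $L$ is the compositum of the subfields $\sigma_i(F)$. Each $\sigma_i(F)\subset\C$ is a CM-field and is therefore closed under complex conjugation, hence so is $L$. Thus complex conjugation restricts to an automorphism $c\in\mathrm{Gal}(L/\Q)$. The key step is to show that $c$ is central in $\mathrm{Gal}(L/\Q)$. Writing $\rho$ for the involution of $F$ supplied by Lemma~\ref{rem:CM}, one has $\sigma_i\circ\rho=c\circ\sigma_i$ for every $i$, and any $\tau\in\mathrm{Gal}(L/\Q)$ satisfies $\tau\circ\sigma_i=\sigma_j$ for some $j$; combining these two identities gives $c\tau=\tau c$ on each generator $\sigma_i(F)$ and hence on all of $L$. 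Centrality implies that $c$ represents complex conjugation under every embedding $L\hookrightarrow\C$, so Lemma~\ref{rem:CM} applies and $L$ is totally real or CM. Since $F\subseteq L$ is not totally real, $L$ must be CM.

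For part (b), let $E\subseteq F$ be a subfield and let $F_0$ denote the maximal totally real subfield of $F$. If $E\subseteq F_0$, Lemma~\ref{lemma:subfieldtotreal} already gives that $E$ is totally real. Otherwise set $E_0=E\cap F_0$; since $[F:F_0]=2$, one finds $[E:E_0]=2$ as well, so I may write $E=E_0(e)$ with $e\in E\setminus F_0$. The key step is to show that the conjugation $\rho$ of $F$ stabilizes $E$. The minimal polynomial of $e$ over $F_0$ is the quadratic $X^2-(e+\rho(e))X+e\rho(e)$, and since $[E_0(e):E_0]=2$ already, this must coincide with the minimal polynomial of $e$ over $E_0$; hence $e+\rho(e)$ and $e\rho(e)$ lie in $E_0\subseteq E$, forcing $\rho(e)\in E$ and therefore $\rho(E)=E$. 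The restriction $\rho|_E$ is non-trivial and still agrees with complex conjugation under every embedding of $E$ (each such embedding extends to one of $F$), so Lemma~\ref{rem:CM} shows $E$ is CM.

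The main obstacle is the centrality argument in part (a): it is the one place where the global Galois structure of $L$ genuinely intervenes, packaging the local identities $\sigma_i\rho=c\sigma_i$ on each conjugate of $F$ into a single global involution of $L$. In part (b) the delicate point is identifying the minimal polynomial of a generator $e\in E\setminus F_0$ over $E_0$ with its minimal polynomial over $F_0$; once this equality is in place, $\rho$-stability of $E$ is automatic and the rest follows directly from Lemma~\ref{rem:CM}.
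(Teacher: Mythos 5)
Your part (a) is correct. Fixing one embedding of the normal closure $L$, checking that complex conjugation $c$ stabilizes $L$, and then proving that $c$ is central in $\mathrm{Gal}(L/\Q)$ via the identities $\sigma_i\circ\rho=c\circ\sigma_i$ and $\tau\circ\sigma_i=\sigma_j$ is a clean variant of the paper's argument, which instead verifies the hypothesis of Lemma~\ref{rem:CM} by an induction over the composita $\Q(\alpha_i)\Q(\alpha_j)$; both proofs rest on Lemma~\ref{rem:CM}, and your centrality computation is essentially the commutation trick the paper uses later in part (b).

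Part (b), however, has a genuine gap at the step ``since $[F:F_0]=2$, one finds $[E:E_0]=2$ as well''. This does not follow from $[F:F_0]=2$ for a general quadratic extension: take $F$ the Galois closure of $\Q(\sqrt[3]{2})$, $F_0=\Q(\sqrt[3]{2})$ (so $[F:F_0]=2$) and $E=\Q(\omega\sqrt[3]{2})$ with $\omega$ a primitive cube root of unity; then $E\not\subseteq F_0$, $E\cap F_0=\Q$ and $[E:E\cap F_0]=3$. So the degree claim --- and with it the identification of the minimal polynomial of $e$ over $E_0$ with the quadratic $X^2-(e+\rho(e))X+e\rho(e)$ --- must use the CM hypothesis, but up to that point your argument has only used that $\rho$ generates $\mathrm{Gal}(F/F_0)$, which holds in the example above as well. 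In fact, for a CM field the statement $[E:E\cap F_0]\le 2$ is essentially part (b) itself (it says $E\cap F_0$ is the maximal totally real subfield of $E$), so as written the argument is circular at its key step. It can be repaired by genuinely invoking the defining property of $\rho$ from Lemma~\ref{rem:CM}: for $e\in E$ and any embedding $\sigma\colon F\hookrightarrow\C$ one has $\sigma(e+\rho(e))=\sigma(e)+\overline{\sigma(e)}$ and $\sigma(e\rho(e))=\sigma(e)\overline{\sigma(e)}$, which depend only on $\sigma|_E$; hence all embeddings of $F$ agreeing on $E$ agree on $e+\rho(e)$ and $e\rho(e)$, which forces these elements into $E$ (otherwise two embeddings of $E(e+\rho(e))$ extending the same embedding of $E$ and differing on it would extend to $F$). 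This gives $\rho(e)\in E$ for every $e\in E$, so $\rho(E)=E$ and your final appeal to Lemma~\ref{rem:CM} goes through. Alternatively, follow the paper's route: use (a) to replace $F$ by its normal closure, show that $g\circ h=h\circ g$ for all $h\in\mathrm{Gal}(F/E)$ by comparing the embeddings $\sigma$ and $\sigma\circ h$, and conclude that the conjugation stabilizes $E$.
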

\begin{proof} 
We start with the proof of part (a): Let $\Q(\alpha_1)=F(\alpha_1)$ be a CM-field of degree $n$ with maximal totally real subfield $F$. Denoting by $\alpha_1,\ldots,\alpha_n$ all roots of the minimal polynomial of $\alpha_1$ over the rationals, every field $\Q(\alpha_i)$ has to be a CM-field, since $F$ is totally real and $\sigma(\alpha_1)$ is complex under every $\Q$-embedding $\sigma$ of $\Q(\alpha_1)$. Further, the composition of fields $$M:=\Q(\alpha_1)\cdot\ldots\cdot\Q(\alpha_n)$$ is the normal closure of $\Q(\alpha_1)$. We choose an embedding $M\hookrightarrow\C$ and write $g$ for the complex conjugation on $M$. Restricting $g$ to a field $\Q(\alpha_i)$ we get $g(\Q(\alpha_i))=\Q(\alpha_i)$ and therefore $g(\Q(\alpha_i)\Q(\alpha_j))=\Q(\alpha_i)\Q(\alpha_j)$. With respect to every embedding $$\Q(\alpha_i)\Q(\alpha_j)\hookrightarrow\C$$ the automorphism $g$ coincides with the complex conjugation on $\Q(\alpha_i)$ and $\Q(\alpha_j)$ and hence on $\Q(\alpha_i)\Q(\alpha_j)$. Now assertion (a) follows by induction and Lemma~\ref{rem:CM}.

To prove part (b) we start with a subfield $E$ of a CM-field $F$ and by the previous part we may assume that $F$ is normal over the rationals. Now we take an $h\in\textnormal{Gal}(F/E)$ and the automorphism $g$ as in Lemma~\ref{rem:CM}. For a $\Q$-embedding $\sigma:F\hookrightarrow\C$ the equation $$\overline{\cdot}\circ\sigma=\sigma\circ g$$ holds, where $\overline{\cdot}$ stands for the usual complex conjugation. Further, $\sigma\circ h$ defines another embedding of $F$, such that $$\overline{\cdot}\circ\sigma\circ h=\sigma\circ h\circ g$$ follows. Combining these two equations we get $$\sigma\circ g\circ h=\sigma\circ h\circ g$$ and by injectivity of $\sigma$ finally $g\circ h=h\circ g$. This means that $g$ commutes with every element of $\textnormal{Gal}(F/E)$ and therefore we can restrict $g$ to an automorphism on $E$. Now this restriction fulfills the condition of Lemma~\ref{rem:CM}, as every embedding of $E$ can be extended to one of $F$.
\end{proof}

The third and fourth non-trivial type of endomorphism algebras are totally definite and totally indefinite quaternion algebras:

\begin{definition} Let $B$ be a quaternion algebra over a totally real number field $F$.
\begin{itemize} 
\item[\rm(a)] $B$ is \textit{totally definite} if $$B\otimes_\sigma\R\simeq\mathbb{H}$$ holds for every embedding $\sigma:F\hookrightarrow\R$.
\item[\rm(b)] $B$ is \textit{totally indefinite} if $$B\otimes_\sigma\R\simeq\textnormal{M}_2(\R)$$ holds for every embedding $\sigma:F\hookrightarrow\R$.
\end{itemize}
\end{definition}

Moreover, every element of a quaternion algebra $B$ over a totally real number field $F$ can be written as $a+bi+cj+dij$ with $a,b,c,d\in F$, $i^2=\alpha\in F\setminus\{0\}$, $j^2=\beta\in F\setminus\{0\}$ and $ij=-ji$. Such an algebra is defined by $\alpha$ and $\beta$, and as usual we use the notation $B=\Big(\frac{\alpha,\,\beta}{F}\Big)$. The following Lemma provides alternative conditions for total definiteness and indefiniteness:

\begin{lemma}\label{crit:totdef} Let $F$ be a totally real number field and $B=\Big(\frac{\alpha, \beta}{F}\Big)$ a quaternion algebra.
\begin{itemize}
\item[\rm(a)] $B$ is totally definite if and only if $\sigma(\alpha)<0$ and $\sigma(\beta)<0$ for every $\Q$-embedding $\sigma: F\hookrightarrow\C$.
\item[\rm(b)] $B$ is totally indefinite if and only if $\sigma(\alpha)>0$ or $\sigma(\beta)>0$ for every $\Q$-embedding $\sigma: F\hookrightarrow\C$.
\end{itemize}
\end{lemma}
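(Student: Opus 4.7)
The plan is to reduce everything to the classification of quaternion algebras over $\R$: every such algebra is either isomorphic to Hamilton's quaternions $\mathbb{H}$ or to the split algebra $\textnormal{M}_2(\R)$, and it suffices to show that $\Big(\frac{a,b}{\R}\Big)\cong\mathbb{H}$ precisely when both $a<0$ and $b<0$. Since $\sigma$ is injective on $F$, the numbers $\sigma(\alpha),\sigma(\beta)$ are always nonzero, so we have a clean dichotomy on the signs.

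The key tool is the rescaling isomorphism: for any $u,v\in\R^\times$, the map sending the standard generators $i,j$ to $ui,vj$ yields an isomorphism
$$\Big(\tfrac{u^2 a,\, v^2 b}{\R}\Big) \;\cong\; \Big(\tfrac{a,b}{\R}\Big).$$
First I would treat the case $a<0$ and $b<0$: write $a=-s^2$ and $b=-t^2$ with $s,t>0$ and use the rescaling to obtain $\Big(\tfrac{-1,-1}{\R}\Big)$, which is $\mathbb{H}$ by definition. Next I would handle the case in which at least one of $a,b$ is positive; by symmetry say $a>0$, so writing $a=s^2$ the rescaling gives $\Big(\tfrac{1,b}{\R}\Big)$. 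In this algebra the generator $i$ satisfies $i^2=1$, hence $(1-i)(1+i)=0$ with $1\pm i$ both nonzero, so the algebra has zero divisors and is not a division algebra. By the classification it must then be $\textnormal{M}_2(\R)$.

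Applying this pointwise, for each embedding $\sigma:F\hookrightarrow\R$ the algebra $B\otimes_\sigma\R = \Big(\tfrac{\sigma(\alpha),\sigma(\beta)}{\R}\Big)$ is $\mathbb{H}$ iff $\sigma(\alpha)<0$ and $\sigma(\beta)<0$, and is $\textnormal{M}_2(\R)$ iff $\sigma(\alpha)>0$ or $\sigma(\beta)>0$. Assertions (a) and (b) follow immediately by quantifying over all embeddings.

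There is no real obstacle here; the only care needed is to check the zero-divisor argument for the split case and to keep track of the sign conventions so that no embedding accidentally gives $\sigma(\alpha)=0$, which is excluded by injectivity of $\sigma$ combined with $\alpha\in F\setminus\{0\}$.
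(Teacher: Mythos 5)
Your proof is correct, and it follows the same overall reduction as the paper: work one embedding at a time and decide whether the real fiber $B\otimes_\sigma\R\simeq\Big(\frac{\sigma(\alpha),\,\sigma(\beta)}{\R}\Big)$ is $\mathbb{H}$ or $\textnormal{M}_2(\R)$ according to the signs. The mechanism differs slightly. The paper argues through the reduced norm form $a^2-b^2\sigma(\alpha)-c^2\sigma(\beta)+d^2\sigma(\alpha)\sigma(\beta)$: when both images are negative the form is positive definite, so the fiber has no zero divisors and hence is $\mathbb{H}$ (a Frobenius-type fact), while if, say, $\sigma(\alpha)>0$ the intermediate value theorem produces a norm-zero element, so the fiber is not a skew field; part (b) is dismissed as analogous. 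You instead normalize via the rescaling isomorphism $\Big(\frac{u^2a,\,v^2b}{\R}\Big)\simeq\Big(\frac{a,b}{\R}\Big)$, landing on $\Big(\frac{-1,-1}{\R}\Big)=\mathbb{H}$ in the definite case and on $\Big(\frac{1,b}{\R}\Big)$ with the explicit zero divisors $(1-i)(1+i)=0$ in the other case. Your definite direction is arguably cleaner, since it exhibits the isomorphism with $\mathbb{H}$ directly rather than inferring it from anisotropy of the norm form; on the other hand, your indefinite direction (and equally the paper's implicit treatment of (b)) still relies on the standard dichotomy that a quaternion algebra over a field is either a division algebra or isomorphic to the $2\times 2$ matrix algebra, so neither argument is more elementary on that point. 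Your explicit zero divisors in $\Big(\frac{1,b}{\R}\Big)$ are, after rescaling, the same norm-zero elements the paper produces via the intermediate value theorem, so the two proofs are close relatives rather than genuinely different.
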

\begin{proof} We only prove (a), as (b) follows in the same way.

Consider an element $f=a+bi+cj+dij$ in $B$, its norm over $F$ is $a^2-b^2\alpha-c^2\beta+d^2\alpha\beta$. Assume by way of contradiction that $B$ is totally definite and that there exists a $\Q$-embedding $\sigma$ with $\sigma(\alpha)>0$. Then we find an element in $B\otimes_\sigma\R$ with Norm $-b_1^2\sigma(\alpha)<0$. Taking a sufficient large real number $a_1$ we get $a_1^2-b_1^2\sigma(\alpha)>0$ and by the intermediate value theorem there exists a real number $a_2$, such that the equation $$a_2^2-b_1^2\sigma(\alpha)=0$$ holds. Then $B\otimes_\sigma\R$ is not a skew field and cannot be isomorphic to $\mathbb{H}$, a contradiction.

Conversely, suppose $\sigma(\alpha)<0$ and $\sigma(\beta)<0$ for every $\Q$-embedding $\sigma: F\hookrightarrow\C$. Therefore it follows that $$a^2-b^2\sigma(\alpha)-c^2\sigma(\beta)+d^2\sigma(\alpha)\sigma(\beta)>0$$ for arbitrary real numbers $a,b,c$ and $d$. This means that $B\otimes_\sigma\R$ has no zero divisors and is therefore isomorphic to $\mathbb{H}$, which completes the proof.
\end{proof}

In the next Section we will construct examples of simple abelian varieties with totally indefinite quaternion multiplication which possess automorphisms whose eigenvalues are the roots of a Salem polynomial. Therefore we will need information about splitting fields of quaternion algebras. The following definition and Lemma can be found in the excellent reference~\cite{Voight:quaternion}:

\begin{definition}\cite[5.4.5]{Voight:quaternion} A quaternion algebra $B$ over $F$ is \textit{split} if $B\simeq\textnormal{M}_2(F)$. A field $K$ containing $F$ is a \textit{splitting field} for $B$ if $B\otimes_FK\simeq\textnormal{M}_2(K)$.
\end{definition}

\begin{lemma}\label{lem:indef1}\textnormal{\cite[5.4.7]{Voight:quaternion}} Let $B$ be a quaternion algebra over $F$ and let $K\supset F$ be a quadratic extension of fields. Then $K$ is a splitting field for $B$ if and only if there is an injective $F$-algebra homomorphism $K\hookrightarrow B$.
\end{lemma}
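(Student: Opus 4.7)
My plan is to prove both directions using simple-module theory, leveraging the fact that $B$, and hence $B\otimes_F K$, is a central simple algebra of dimension $4$ (over $F$ and $K$ respectively).

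For the easy direction, suppose $\iota\colon K\hookrightarrow B$ is an $F$-algebra embedding. Then $B$ acquires the structure of a right $K$-module of $K$-dimension $2$ via $\iota$, and left multiplication by $B$ commutes with right multiplication by $K$. This produces an $F$-algebra map $B\to\End_K(B)\cong M_2(K)$, which by the universal property of the tensor product extends uniquely to a $K$-algebra homomorphism $\phi\colon B\otimes_F K\to M_2(K)$. Since $B\otimes_F K$ is simple, $\phi$ is injective; and since both sides have $K$-dimension $4$, $\phi$ is forced to be an isomorphism, so $K$ splits $B$.

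For the converse, assume $B\otimes_F K\cong M_2(K)$ and let $W\cong K^2$ be the unique simple right $M_2(K)$-module, so that $\dim_F W=4$. Through the inclusion $B\hookrightarrow B\otimes_F K$, $W$ inherits a right $B$-module structure, and the $K$-action on $W$ (coming from the center $1\otimes K\subset B\otimes_F K$) commutes with this $B$-action. Hence the $K$-action defines an $F$-algebra map $K\to\End_B(W)$, which is automatically injective because $K$ is a field, and it remains only to identify $\End_B(W)$ with $B$ as $F$-algebras.

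The identification $\End_B(W)\cong B$ is the step I expect to require the most care, because the structure of $W$ as a right $B$-module depends on whether $B$ is a division algebra or split. In the division case, $W\cong B$ as right $B$-modules by a dimension count against the unique simple $B$-module, and $\End_B(B)\cong B$ canonically. In the split case, $B\cong M_2(F)$ and $W$ must decompose as two copies of the standard simple module $F^2$, yielding $\End_B(W)\cong M_2(\End_{M_2(F)}(F^2))=M_2(F)\cong B$. Either way, the resulting $F$-algebra embedding $K\hookrightarrow\End_B(W)\cong B$ supplies the desired map.
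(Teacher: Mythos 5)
The paper itself gives no proof of this lemma --- it is quoted directly from \cite[5.4.7]{Voight:quaternion} --- so there is no in-paper argument to compare against; judged on its own, your proof is correct and complete. The forward direction is the standard base-change argument: $B$ becomes a $2$-dimensional right $K$-vector space via $\iota$, left multiplication gives an $F$-algebra map $B\to\End_K(B)\simeq\textnormal{M}_2(K)$, the induced $K$-algebra map on $B\otimes_F K$ is injective because $B\otimes_F K$ is simple, and the $K$-dimension count $4=4$ makes it an isomorphism. Your converse is a correct centralizer-style argument: the simple right $\textnormal{M}_2(K)$-module $W\cong K^2$ is a right $B$-module via $b\mapsto b\otimes 1$, the central subalgebra $1\otimes K$ acts by $B$-module endomorphisms, and since $K$ is a field this yields an injective $F$-algebra map $K\to\End_B(W)$. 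Your two-case identification $\End_B(W)\cong B$ is also right: when $B$ is a division algebra, $W$ is free over $B$ and the $F$-dimension count forces $W\cong B$ as right $B$-modules, with $\End_B(B)\cong B$ by left multiplication; when $B\cong\textnormal{M}_2(F)$, semisimplicity gives $W\cong F^2\oplus F^2$ and $\End_B(W)\cong\textnormal{M}_2(F)$ (in this split case the conclusion is in any event immediate, since a quadratic extension embeds in $\textnormal{M}_2(F)$ via a companion matrix). The only point worth keeping explicit is that these identifications are isomorphisms of $F$-algebras, so that the composite $K\hookrightarrow\End_B(W)\cong B$ really is an $F$-algebra embedding; your construction does supply this, since $F$ acts on $W$ through the center on both sides.
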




\section{Fixed points on simple abelian varieties}\label{sect:abelian}

In this Section we will develop concrete criteria which allow for a given endomorphism to decide whether its fixed-point function is periodic or grows exponentially. The first remark explains why we will consider simple abelian varieties instead of arbitrary ones.

\begin{remark}\label{simple} We show here that every algebraic integer can occur as an eigenvalue of an endomorphism of an abelian variety. In fact, given any polynomial  $$P(t)=(-1)^n(t^n+a_{n-1}t^{n-1}+\ldots+a_1t+a_0)\in\mathbb{Z}[t]$$ we can take any elliptic curve $E$. On its product $E^n$ the endomorphism $$f=\matr{0 &  & \ldots & & 0 & -a_0 \\ 1 & 0 & \ldots & & 0 & -a_1 \\  & 1 & \ddots & 0 & & \vdots \\ & & \ddots & & 0 & \vdots \\ & 0 &  & & 1 & -a_{n-1}}$$ has $P(t)$ as its characteristic polynomial.
\end{remark}

The following three propositions give rise to the first part of Theorem~\ref{thm:abelian}. We follow the classification of endomorphism algebras and investigate simple abelian varieties with real, complex and totally definite quaternion multiplication. We start with real multiplication.

\begin{proposition}\label{prop:realmult} Let $X$ be a simple $g$-dimensional abelian variety with real multiplication, i.e., $F:=\End_\Q(X)$ is a totally real number field. Then the fixed-points function of any non-zero endomorphism $f\in\End(X)$ is periodic if $f=\pm\id$, and it grows exponentially otherwise.
\end{proposition}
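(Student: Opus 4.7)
The plan is to apply the second version of the Holomorphic Lefschetz Fixed-Point Formula (already recorded in the excerpt) in order to reduce the problem to elementary number theory about the $\Q$-conjugates of $f$ under the real embeddings of $F$. Since $D=F$ is totally real, one has $d=1$ and the reduced norm is just $N_{F/\Q}$, so applied to $f^n$ the formula reads
$$\fix(f^n)=N_{F/\Q}(1-f^n)^{2g/e}=\prod_{\sigma}(1-\sigma(f)^n)^{2g/e},$$
where $\sigma$ ranges over the $e=[F:\Q]$ embeddings $F\hookrightarrow\R$. With this identity in hand, both implications become transparent.

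For the easy direction, if $f=\pm\id$ then $f^n\in\{\id,-\id\}$, so the right-hand side cycles through at most two values and the sequence is periodic. For the converse, I would assume $f\neq 0,\pm\id$ and establish two facts: (i) $|\sigma(f)|\neq 1$ for every embedding $\sigma$, and (ii) at least one $\sigma$ satisfies $|\sigma(f)|>1$. Claim (i) is the crucial observation: since $\sigma(f)\in\R$, one only needs to rule out $\sigma(f)=\pm1$; but if $\sigma(f)=\pm 1$ for some $\sigma$ then $\pm 1$ is a root of the minimal polynomial of $f$ over $\Q$, and irreducibility of this polynomial forces it to equal $x\mp 1$, hence $f=\pm\id$, contrary to assumption. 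Claim (ii) is then immediate from the fact that $f$ is a nonzero algebraic integer in $F$, so $N_{F/\Q}(f)=\prod_\sigma \sigma(f)\in\Z\setminus\{0\}$ has absolute value $\geq 1$; combined with $|\sigma(f)|\neq 1$ this forces at least one strictly larger than $1$.

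Taking logarithms in the displayed formula, the embeddings with $|\sigma(f)|<1$ contribute bounded terms while each embedding with $|\sigma(f)|>1$ contributes $n\log|\sigma(f)|+O(1)$, so $\log\fix(f^n)$ grows linearly in $n$ and $\fix(f^n)$ grows exponentially, as required.

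The main subtlety is step (i): a priori one might worry about real conjugates of absolute value exactly one that are not $\pm 1$, but total reality together with the irreducibility of the minimal polynomial conspires to rule this out. Kronecker's theorem on algebraic integers of absolute value one would give a one-line alternative route to the same conclusion, but the minimal-polynomial argument is more self-contained and avoids invoking external machinery.
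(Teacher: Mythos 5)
Your argument is correct, but it follows a genuinely different route from the paper's. The paper applies both versions of the Holomorphic Lefschetz Fixed-Point Formula to the shifted endomorphisms $f-k+1$ and, by comparing the two resulting expressions at every integer $k$, identifies each analytic eigenvalue $\lambda_i$ with a real conjugate $\sigma_j(f)$; the dichotomy is then read off from the eigenvalues, using (as you also do) that a real conjugate of absolute value $1$ must be $\pm1$, which by irreducibility of the minimal polynomial forces $f=\pm\id$. You instead bypass the analytic eigenvalues entirely: you apply only the norm version of the formula, directly to $f^n$, rule out conjugates of absolute value exactly $1$ by the minimal-polynomial observation, use integrality of $\N_{F/\Q}(f)$ to guarantee at least one conjugate of absolute value strictly greater than $1$, and then estimate $\prod_\sigma\abs{1-\sigma(f)^n}$ asymptotically. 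Your version is more self-contained for this particular proposition and is in fact more explicit on a point the paper leaves implicit, namely why some conjugate must exceed $1$ in absolute value so that the growth is genuinely exponential rather than the sequence being merely bounded and non-constant. What your route does not deliver is the identification of the analytic eigenvalues with the conjugates $\sigma_j(f)$: the paper reuses exactly this fact later, in the ``Moreover'' clause of Theorem~\ref{thm:abelian} (eigenvalues are roots of unity, respectively of absolute value $\neq 1$) and in the entropy discussion (Proposition~\ref{prop:ent}, where it is needed that in the real-multiplication case all analytic eigenvalues are real), so within the paper's architecture the polynomial-comparison step is not redundant even though your shorter argument suffices for the statement as posed.
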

\begin{proof} For an endomorphism $f$ we get $\Q(1-f)$ as a subfield of $F$ which has to be totally real according to Lemma~\ref{lemma:subfieldtotreal}. Let $e:=[F:\Q]$, $m:=[F:\Q(1-f)]$, $l:=[\Q(1-f):\Q]$ and let $\sigma_1,\ldots,\sigma_l$ be the $\Q$-embeddings of $\Q(1-f)$ into the algebraic closure. Then the roots of the minimal polynomial of $1-f$ can be written as $$\sigma_1(1-f)=1-\sigma_1(f),\ldots,\sigma_l(1-f)=1-\sigma_l(f).$$ By the Holomorphic Lefschetz Fixed-Point Formula~\cite[13.1.2.]{BL:CAV} we have $$\prod_{i=1}^g(1-\lambda_i)(1-\overline{\lambda_i})=\#\Fix(f)=(\N_{F/\Q}(1-f))^{2g/e}=(\Big(\prod_{j=1}^l1-\sigma_j(f)\Big)^m)^{2g/e},$$ where $\lambda_1,\ldots,\lambda_g$ are the eigenvalues of the analytic representation $\rho_a(f)$. Considering for any integer $k$ the fixed-point number of the endomorphism $f-k+1$, the equation above becomes $$\prod_{i=1}^g(k-\lambda_i)(k-\overline{\lambda_i})=\#\Fix(f-k+1)=(\Big(\prod_{j=1}^lk-\sigma_j(f)\Big)^m)^{2g/e}.$$ Since the two polynomials coincide at every integer $k$, every eigenvalue $\lambda_i$ must coincide with an element $\sigma_j(f)$. As all $\sigma_j(f)$ are real, an eigenvalue $\lambda_i$ of absolute value $1$ must be $\pm 1$. If $1$ or $-1$ is a root of the minimal polynomial of $f$, then $f$ is $\id$ or $-\id$. Hence the function $n\mapsto\#\Fix(f^n)$ is only periodic for $f=\pm\id$ and grows exponentially otherwise.
\end{proof}

\begin{proposition}\label{prop:CM} Let $X$ be a simple $g$-dimensional abelian variety with complex multiplication, i.e., $F:=\End_\Q(X)$ is a CM-field, and let $f$ be a non-zero endomorphism. Then the fixed-point function $n\mapsto\fix(f^n)$ is periodic if $|f|=1$, and it grows exponentially otherwise.

In particular, all analytic eigenvalues are roots of unity in the periodic case and no analytic eigenvalue is of absolute value $1$ in the exponential case.
\end{proposition}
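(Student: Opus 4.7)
The plan is to mirror the argument of Proposition~\ref{prop:realmult}, replacing its totally real structure by the CM structure. First I would set $E:=\Q(f)$. By Lemma~\ref{lem:CM}(b), $E$ is either a totally real number field or again a CM-field, so in either case Lemma~\ref{rem:CM} supplies an automorphism $g$ of $E$ that coincides with complex conjugation under every embedding into $\C$. Applying the Holomorphic Lefschetz Fixed-Point Formula in both its analytic and algebraic forms to the endomorphism $f-(k-1)\id_X$, viewed as a function of $k\in\Z$, produces, exactly as in the proof of Proposition~\ref{prop:realmult}, the polynomial identity
$$\prod_{i=1}^g(k-\lambda_i)(k-\overline{\lambda_i})=\Big(\prod_{j=1}^l(k-\sigma_j(f))\Big)^{2g/l},$$
where $l=[E:\Q]$ and $\sigma_1,\ldots,\sigma_l$ are the $\Q$-embeddings of $E$ into $\C$. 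Matching the zeros with multiplicity then shows that every analytic eigenvalue $\lambda_i$ equals some Galois conjugate $\sigma_j(f)$.

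The key new ingredient, which replaces the "conjugates are real" step of Proposition~\ref{prop:realmult}, is the observation that in this setting the condition $|\sigma(f)|=1$ either holds for every embedding of $E$ or for none. Writing $E_0$ for the maximal totally real subfield of $E$, the product $f\cdot g(f)$ lies in $E_0$ and satisfies $|\sigma_j(f)|^2=\sigma_j(fg(f))$; thus if one $\sigma_j$ sends $fg(f)$ to $1$, then $fg(f)-1\in E_0$ acquires a vanishing embedding and hence vanishes, forcing $fg(f)=1$ and consequently $|\sigma_k(f)|=1$ for every $k$. (If $E$ happens already to be totally real, then $g=\id$ and the statement degenerates to the triviality that $|\sigma(f)|=1$ for some $\sigma$ forces $f=\pm 1$.)

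With this in hand the dichotomy finishes swiftly. If $|f|=1$, then every conjugate of $f$ lies on the unit circle, so Kronecker's theorem identifies $f$ as a root of unity; every $\lambda_i$ is then likewise a root of unity, and $\fix(f^n)=\prod_i|1-\lambda_i^n|^2$ is manifestly periodic. If $|f|\neq 1$, then by the preceding paragraph no $|\sigma_j(f)|$ equals $1$, so no $|\lambda_i|$ equals $1$; moreover $\prod_j|\sigma_j(f)|=|\N_{E/\Q}(f)|$ is a positive integer, so at least one $|\sigma_j(f)|>1$, and hence some $|\lambda_i|>1$. The factor $|1-\lambda_i^n|^2$ attached to such an eigenvalue then grows like $|\lambda_i|^{2n}$, while the remaining factors stay bounded below by a positive constant (the corresponding $|\lambda_k|$ being strictly less than or strictly greater than $1$), forcing exponential growth.

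The main obstacle I anticipate is the middle step: the all-or-nothing behaviour of the unit-circle condition inside a CM-field is precisely what makes Kronecker's theorem applicable, and it is the decisive point distinguishing the CM case from the totally real case, where the analogous observation was immediate because all conjugates were automatically real.
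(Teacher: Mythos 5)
Your proposal is correct and follows essentially the same route as the paper: both reduce to the subfield $\Q(f)$, apply the two forms of the Holomorphic Lefschetz Fixed-Point Formula to $f-k+1$ for all integers $k$ to identify every analytic eigenvalue with a Galois conjugate of $f$, and then use the CM structure to obtain the periodic/exponential dichotomy. The only deviation is that the paper invokes Daileda's theorem \cite{Daileda:cm-field} (applied in the normal closure of $F$, a CM-field by Lemma~\ref{lem:CM}) to conclude that conjugates of absolute value $1$ are roots of unity, whereas you reprove that fact inline via the involution of Lemma~\ref{rem:CM} (noting $f\,g(f)$ lies in the maximal totally real subfield) together with Kronecker's theorem --- a self-contained but equivalent substitute, with your explicit growth estimate filling in details the paper leaves implicit.
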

\begin{proof} If $f$ lies in the CM-field $F$, then by Lemma~\ref{lem:CM} the subfield $\Q(1-f)=\Q(f)$ is either a totally real number field or a CM-field. We may assume the second case, since the first one is treated in Proposition~\ref{prop:realmult}. Now we adopt the notation  $e:=[F:\Q]$, $m:=[F:\Q(1-f)]$ and $l:=[\Q(1-f):\Q]$ from Proposition~\ref{prop:realmult} and as in the proof of Proposition~\ref{prop:realmult} we get for any integer $k$ the equation $$\prod_{i=1}^g(k-\lambda_i)(k-\overline{\lambda_i})=\#\Fix(f-k+1)=(\Big(\prod_{j=1}^lk-\sigma_j(f)\Big)^m)^{2g/e},$$ such that every analytic eigenvalue $\lambda_i$ of $f$ has to coincide with a conjugate root $\sigma_j(f)$ of $f$. All $\lambda_i$ are in the normal closure of $F$ that is again a CM-field by Lemma~\ref{lem:CM}. In a CM-field all elements of absolute value $1$ have to be roots of unity by~\cite[Theorem 2]{Daileda:cm-field}. As all $\lambda_i$ are roots of the minimal polynomial of $f$, it suffices to test whether $f$ is of absolute value $1$ or not. 

Finally if $|f|=1$ holds, then all $\lambda_i$ are roots of unity and the function $n\mapsto\fix(f^n)$ is periodic. Otherwise, in the case $|f|\neq 1$ no $\lambda_i$ is of absolute value $1$ and the fixed-point function grows exponentially.
\end{proof}



\begin{proposition}\label{prop:totdef} Let $X$ be a simple $g$-dimensional abelian variety with totally definite quaternion multiplication $\End_\Q(X)=\Big(\frac{\alpha,\beta}{F}\Big)$ and let $f=a+bi+cj+hij$ be a non-zero endomorphism. Then the fixed-point function $n\mapsto\fix(f^n)$ is periodic if $|a+\sqrt{b^2\alpha+c^2\beta-h^2\alpha\beta}|=1$, and it grows exponentially otherwise.

In particular, all analytic eigenvalues are roots of unity in the case of periodicity and no analytic eigenvalue is of absolute value $1$ in the other case.
\end{proposition}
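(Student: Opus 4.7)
The plan is to reduce the problem to a CM setting by adjoining a square root of the reduced discriminant of $f$ to the center, and then to mirror the arguments of Propositions~\ref{prop:realmult} and~\ref{prop:CM}. First I would introduce $\theta$ with $\theta^2=b^2\alpha+c^2\beta-h^2\alpha\beta$ and rewrite the reduced norm as
$$\textnormal{Nrd}_{D/F}(1-f)=(1-a)^2-\alpha b^2-\beta c^2+\alpha\beta h^2=(1-(a+\theta))(1-(a-\theta)).$$
Total definiteness together with Lemma~\ref{crit:totdef}(a) gives $\sigma(\theta^2)<0$ for every real embedding $\sigma:F\hookrightarrow\R$, so either $\theta\in F$ (which forces $b=c=h=0$ and reduces the question to Proposition~\ref{prop:realmult}), or $F(\theta)/F$ is a totally imaginary quadratic extension and $F(\theta)$ is a CM field containing $a+\theta$.

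Next I would apply the second form of the Holomorphic Lefschetz Fixed-Point Formula with $d=2$ and $e=[F:\Q]$, and use the transitivity $\textnormal{N}_{F(\theta)/\Q}=\textnormal{N}_{F/\Q}\circ\textnormal{N}_{F(\theta)/F}$ to rewrite
$$\fix(f)=\textnormal{N}_{F(\theta)/\Q}\!\big(1-(a+\theta)\big)^{g/e}.$$
Following the shifting trick from Propositions~\ref{prop:realmult} and~\ref{prop:CM}, evaluating this on $f-(k-1)\id_X$ for every integer $k$ and comparing with the eigenvalue form $\fix=\prod_{i=1}^g(1-\lambda_i)(1-\overline{\lambda_i})$ yields a polynomial identity in $k$,
$$\prod_{i=1}^g(k-\lambda_i)(k-\overline{\lambda_i})=\Big(\prod_{j=1}^l\big(k-\sigma_j(a+\theta)\big)\Big)^{mg/e},$$
where $\sigma_1,\dots,\sigma_l$ are the $\Q$-embeddings of $\Q(a+\theta)$ and $m=[F(\theta):\Q(a+\theta)]$. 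Unique factorization of both sides then forces every analytic eigenvalue $\lambda_i$ to coincide with some Galois conjugate $\sigma_j(a+\theta)$.

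The final step is to transfer the absolute-value-one condition from $a+\theta$ to every one of its conjugates. By Lemma~\ref{lem:CM}(b) the subfield $\Q(a+\theta)$ of the CM field $F(\theta)$ is itself totally real or CM, and by Lemma~\ref{lem:CM}(a) the same is true of its normal closure, which houses all the $\sigma_j(a+\theta)$. Daileda's theorem~\cite{Daileda:cm-field} in the CM case, together with Kronecker's theorem in the totally real case, ensures that an algebraic integer of absolute value one at every Archimedean place is a root of unity. The crucial additional observation is that the product $(a+\theta)(a-\theta)=a^2-b^2\alpha-c^2\beta+h^2\alpha\beta$ already lies in the totally real field $F$, so the injectivity of any $\Q$-embedding $F\hookrightarrow\C$ upgrades the condition $|\sigma(a+\theta)|=1$ at a single place to the algebraic identity $(a+\theta)(a-\theta)=1$ in $F$, and this identity in turn forces $|\sigma_j(a+\theta)|=1$ for \emph{every} $j$. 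Combining this dichotomy with~\cite{AA:fp-tori} then delivers the proposition: if $|a+\sqrt{b^2\alpha+c^2\beta-h^2\alpha\beta}|=1$, every $\lambda_i$ is a root of unity and $\fix(f^n)$ is periodic; otherwise no $\lambda_i$ has absolute value one and $\fix(f^n)$ grows exponentially.

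The step I expect to be the main obstacle is precisely the CM upgrade from $F$ to $F(\theta)$ together with the dichotomy just described; once these are in place, the remainder is a routine adaptation of the real and CM arguments, with the degenerate case $\theta\in F$ folded into Proposition~\ref{prop:realmult}.
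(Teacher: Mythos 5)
Your proposal is correct and follows essentially the same route as the paper's proof: both use the reduced-norm version of the Lefschetz formula with $d=2$, factor the reduced characteristic polynomial as $(x-(a+\theta))(x-(a-\theta))$, identify the analytic eigenvalues with Galois conjugates of $a+\theta$ via the integer-shift trick, and use total definiteness (Lemma~\ref{crit:totdef}) to see that $F(\theta)$ is a CM-field so that unit-circle eigenvalues are roots of unity. Your explicit upgrade from $|a+\theta|=1$ at one place to $(a+\theta)(a-\theta)=1$ in $F$, and hence to all conjugates, is a welcome clarification of a step the paper leaves implicit, but it does not change the argument.
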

\begin{proof} The totally real number field $F$ is the center of $D=\Big(\frac{\alpha,\beta}{F}\Big)$ and we denote $d^2:=[D:F]$ and $e:=[F:\Q]$. According to the classification of endomorphism algebras of simple abelian varieties~\cite[5.5.7]{BL:CAV} the number $d$ has to be $2$. The endomorphism $f$ can be written as $a+bi+cj+hij$, where $i^2=\alpha,j^2=\beta$ and $ij=-ji$ with $\alpha, \beta\in F$ hold. We assume that $f$ lies in $D\setminus F$, since the case $f\in F$ is treated in Proposition~\ref{prop:realmult}.

To gain insight into the properties of the analytic eigenvalues of $f$ we use the Holomorphic Lefschetz Fixed-Point Formula~\cite[13.1.4]{BL:CAV} and get $$\fix(f)=(\textnormal{N}_{D/\Q}(1-f))^{2g/2e}=(\textnormal{N}_{F/\Q}(\textnormal{N}_{D/F}(1-f)))^{2g/2e}$$ \begin{equation}\label{equation}=(\textnormal{N}_{F/\Q}((1-a)^2-b^2\alpha-c^2\beta+h^2\alpha\beta))^{2g/2e}.\end{equation}
The reduced characteristic polynomial of $f$ is $$\textnormal{N}_{D/F}(x-f)=x^2-\textnormal{T}_{D/F}(f)x+\textnormal{N}_{D/F}(f)=(x-t_1)(x-t_2)$$ with $t_{1/2}=a\pm\sqrt{b^2\alpha+c^2\beta-h^2\alpha\beta}=a\pm\sqrt{t}$. The field $F(\sqrt{t})$ must be a quadratic extension of $F$, since $\sqrt{f}\in F$ implies $$\textnormal{N}_{D/F}(\sqrt{t}+bi+cj+hij)=t-b^2\alpha-c^2\beta+h^2\alpha\beta=0$$ which is impossible in the skew field $D$. Then taking an element $g$ of $F$ it follows that $$\textnormal{N}_{F/\Q}(g)^2=\textnormal{N}_{F(\sqrt{t})/\Q}(g).$$ By assumption, $2e$ divides $g$ (see~\cite[5.5.7]{BL:CAV}) and we can continue with (\ref{equation}) in the following way: $$\fix(f)=(\textnormal{N}_{F/\Q}((1-a)^2-b^2\alpha-c^2\beta+h^2\alpha\beta))^{2g/2e}$$ $$=(\textnormal{N}_{F/\Q}((1-t_1)(1-t_2)))^{2g/2e}=(\textnormal{N}_{F(\sqrt{t})/\Q}((1-t_1)(1-t_2)))^{g/2e}.$$ Further because of $[F(\sqrt{t}):\Q]=2e$ we have $2e$ distinct $\Q$-embeddings $\sigma_1,\ldots,\sigma_{2e}$ of $F(\sqrt{t})$ into the algebraic closure. Using the multiplicity of the norm map and the other version of the Holomorphic Lefschetz Fixed-Point Formula~\cite[13.1.2]{BL:CAV} we get $$\Big(\prod_{j=1}^{2e}(1-\sigma_j(t_1))\prod_{j=1}^{2e}(1-\sigma_j(t_2))\Big)^{g/2e}=\fix(f)=\prod_{i=1}^g(1-\lambda_i)(1-\overline{\lambda_i}),$$ where $\lambda_1,\ldots,\lambda_g$ are the analytic eigenvalues of $f$. Given an arbitrary integer $k$, this equation leads us to $$\Big(\prod_{j=1}^{2e}(k-\sigma_j(t_1))\prod_{j=1}^{2e}(k-\sigma_j(t_2))\Big)^{g/2e}=\fix(f-k+1)=\prod_{i=1}^g(k-\lambda_i)(k-\overline{\lambda_i}),$$ and hence every $\lambda_i$ has to coincide with a $\sigma_j(t_1)$ by the same argument as in the proofs of the previous Propositions.

To complete the proof we need more information about the eigenvalues $\lambda_i$ and therefore we investigate the $\Q$-embeddings of the field $F(\sqrt{t})$. We will show that this field is a CM-field, which means that its normal closure is again a CM-field by Lemma~\ref{lem:CM}. Every $\lambda_i$ lies in the normal closure of $F(\sqrt{t})$, if it is of absolute value $1$, then it has to be a root of unity (see~\cite[Theorem 2]{Daileda:cm-field}).

Taking an eigenvalue $\lambda_i$, we always find a $\Q$-embedding $\sigma_j:F(\sqrt{t})\hookrightarrow\C$, having the property that $$\sigma_j(t_1)=\sigma_j(a)+\sigma_j(\sqrt{t})=\lambda_i$$ holds. Since $a$ is an element of the totally real number field $F$, its image $\sigma_j(a)$ has again to be real. Assuming $\sigma_j(\sqrt{t})$ to be real means that $\sigma_j(\sqrt{t})^2$ is a positive real number. So we get $$\sigma_j(\sqrt{b^2\alpha+c^2\beta-h^2\alpha\beta})^2=\sigma_j(b^2)\sigma_j(\alpha)+\sigma_j(c^2)\sigma_j(\beta)-\sigma_j(h^2)\sigma_j(\alpha)\sigma_j(\beta)>0.$$ But this is impossible, because we have $\sigma_j(\alpha)<0$ and $\sigma_j(\beta)<0$ by Lemma~\ref{crit:totdef}. Therefore every $\lambda_i$ has to be complex and hence the field $F(\sqrt{t})$ and its normal closure are CM-fields which means that every $\lambda_i$ of absolute value $1$ is a root of unity. All $\lambda_i$ are roots of the minimal polynomial of $t_1$, so it suffices to test whether $t_1$ is of absolute value $1$ or not.

Finally it follows that the fixed-point function $n\mapsto\fix(f^n)$ is periodic if $|a+\sqrt{b^2\alpha+c^2\beta-h^2\alpha\beta}|=1$, and grows exponentially otherwise.
\end{proof}


In view of the previous propositions one may ask whether an endomorphism having periodic fixed-point behaviour is already an automorphisms. The following corollary answers this question:

\begin{corollary} Let $X$ be a simple abelian variety with real, totally definite quaternion or complex multiplication and let $f$ be an endomorphism with $\fix(f^n)=0$ for some $n$. Then $f$ is an automorphism.
\end{corollary}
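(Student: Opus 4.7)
The plan is to exploit two facts that are essentially independent of the specific classification worked out in the preceding propositions: the Lefschetz-type interpretation of $\fix$ as a degree, and the simplicity of $X$.

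First, I would reinterpret the hypothesis $\fix(f^n)=0$. The product formula
$\fix(f^n)=\prod_{i=1}^g|1-\lambda_i^n|^2$
vanishes iff some analytic eigenvalue satisfies $\lambda_i^n=1$, which is equivalent to saying that the rational-representation determinant $\det\rho_r(f^n-\id_X)$ vanishes, i.e.\ $f^n-\id_X$ is \emph{not} an isogeny. Equivalently, $\ker(f^n-\id_X)$ contains a positive-dimensional abelian subvariety.

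Next I would invoke simplicity. In each of the three cases the endomorphism algebra $\End_\Q(X)$ is a division algebra, so every non-zero element of $\End(X)$ is an isogeny. Applied to $f^n-\id_X$, this forces the dichotomy: either $f^n-\id_X$ is an isogeny (and $\fix(f^n)\ge 1$), or $f^n-\id_X=0$. The assumption $\fix(f^n)=0$ rules out the first possibility, yielding $f^n=\id_X$.

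Finally, from $f^n=\id_X$ I obtain $f\cdot f^{n-1}=f^{n-1}\cdot f=\id_X$, so $f^{n-1}\in\End(X)$ is a two-sided inverse of $f$; hence $f$ is an automorphism. There is no real obstacle here: the step that might look like it requires the detailed eigenvalue analysis of Propositions~\ref{prop:realmult}, \ref{prop:CM} and \ref{prop:totdef} is in fact already handled by the bare fact that $\End_\Q(X)$ is a division algebra, which is common to all three listed types.
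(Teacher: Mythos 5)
Your argument is correct, and it takes a genuinely different route from the paper's. The paper deduces from $\fix(f^n)=0$ that some analytic eigenvalue is an $n$-th root of unity, then invokes the eigenvalue analysis of Propositions~\ref{prop:realmult}, \ref{prop:CM} and \ref{prop:totdef} (all analytic eigenvalues are conjugate roots of one minimal polynomial, hence all are roots of unity), and finally exhibits an inverse explicitly: $f^{-1}=\bar f$ in the number-field cases, and $f^{-1}=f'$ (quaternion conjugate) in the totally definite case via the identity $f\cdot f'=1$. You instead read $\fix(f^n)=0$ as $\det\rho_r(f^n-\id_X)=0$, i.e.\ $f^n-\id_X$ is not an isogeny, and use simplicity (every nonzero endomorphism of a simple abelian variety is an isogeny; equivalently $\End_\Q(X)$ is a division algebra) to force $f^n=\id_X$, whence $f^{-1}=f^{n-1}\in\End(X)$. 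This buys a stronger conclusion ($f$ has finite order) and complete independence of the hypothesis on the type of $\End_\Q(X)$: your proof works for any simple abelian variety, including the totally indefinite quaternion and second-kind cases, whereas the paper's proof is tied to the three listed cases because it needs the conjugacy of the analytic eigenvalues established there. What the paper's route buys in exchange is the explicit form of the inverse in terms of complex or quaternion conjugation, which ties in with the remark $f\cdot f'=1$ made after Theorem~\ref{thm:abelian}.
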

\begin{proof} Let $f$ be an endomorphism with $\fix(f^n)=0$ for some $n$. Then in view of the fixed-point formula, $\lambda_i^n$ has to be $1$ for at least one analytic eigenvalue. If one analytic eigenvalue is a root of unity, then all eigenvalues are roots of unity. 

For $\End_\Q(X)$ a number field and $f$ an endomorphism, which is a root of unity, $f^{-1}=\overline{f}$ is also contained in $\End(X)$. If the endomorphism $f=a+bi+cj+dij$, whose analytic eigenvalues are roots of unity, lies in an order of a quaternion algebra, then $f'=a-bi-cj-dij$ also does and $$f\cdot f'=(a+\sqrt{b^2\alpha+c^2\beta-d^2\alpha\beta})(a-\sqrt{b^2\alpha+c^2\beta-d^2\alpha\beta})= 1,$$ since $|a\pm\sqrt{b^2\alpha+c^2\beta-d^2\alpha\beta}|=1$ and $a^2-b^2\alpha-c^2\beta+d^2\alpha\beta\in\R_{>0}$.
\end{proof}

\begin{remark} The converse statement is not true in general. According to Dirichlet's unit theorem (see~\cite[p. 39 - 44]{Neukirch:AZT}) there exist orders in totally real number fields and CM-fields whose groups of units contain elements of  absolute value $\neq 1$. Basic examples already exist for abelian surfaces. For instance, the ring of integers in the real quadratic number field $\Q(\sqrt{2})$ contains the unit $1+\sqrt{2}$.
\end{remark}


Concerning part two of Theorem~\ref{thm:abelian} we consider totally indefinite quaternion algebras and prove that they yield endomorphisms whose eigenvalues are the roots of a Salem polynomial. This cannot occur in the previous three cases. We provide explicit algebras and endomorphisms leading to those eigenvalues.

\begin{lemma}\label{ex:totindef} The quaternion algebra $$B:=\Big(\frac{2,\,-2-2\sqrt{13}}{\Q(\sqrt{13})}\Big)$$ is a skew field and totally indefinite. Further, there exists an order $\mathcal{O}$ in $B$ containing $f:=\frac{1}{4}(1-\sqrt{13}+i)$ whose reduced characteristic polynomial has the root $\gamma:=\frac{1}{4}(1-\sqrt{13}+\sqrt{-2-2\sqrt{13}})$, a complex conjugate of a Salem number.
\end{lemma}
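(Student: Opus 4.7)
For total indefiniteness, Lemma~\ref{crit:totdef}(b) applies directly: the element $\alpha=2$ is a positive rational, so $\sigma(\alpha)>0$ for both real embeddings of $F=\mathbb{Q}(\sqrt{13})$. For the order, I would check that $f$ is integral over $\mathcal{O}_F=\mathbb{Z}[\omega]$ with $\omega=(1+\sqrt{13})/2$. A direct computation gives reduced trace $\mathrm{Trd}(f)=\tfrac{1-\sqrt{13}}{2}=1-\omega\in\mathcal{O}_F$, and, interpreting the basis vector in $f$ as the one whose square equals $\beta=-2-2\sqrt{13}$ (the interpretation forced by the displayed form of $\gamma$), reduced norm $\mathrm{Nrd}(f)=\tfrac{(1-\sqrt{13})^2-\beta}{16}=1\in\mathcal{O}_F$. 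Any element with integral reduced trace and norm lies in some $\mathcal{O}_F$-order of $B$, so $\mathcal{O}$ can be chosen as a maximal order containing $f$.

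To prove that $B$ is a skew field I would apply Lemma~\ref{lem:indef1}: $B$ is split iff the quadratic extension $F(\sqrt{2})/F$ embeds into $B$ as an $F$-algebra, iff $\beta$ is a norm from $F(\sqrt{2})=\mathbb{Q}(\sqrt{2},\sqrt{13})$ down to $F$. Since the infinite places are covered by total indefiniteness, by Hasse--Minkowski it suffices to produce one finite place of ramification. I would take the dyadic place: because $13\equiv 5\pmod 8$ (equivalently, $\omega^2-\omega-3$ is irreducible modulo $2$), the rational prime $2$ is inert in $F$, so $F_{\mathfrak{p}_2}$ is the unramified quadratic extension of $\mathbb{Q}_2$, with residue field $\mathbb{F}_4$ and uniformizer $\pi=2$. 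Using bilinearity of the Hilbert symbol modulo squares together with the identity $-1=N_{F(\sqrt{2})/F}(1+\sqrt{2})$ (which shows $(2,-1)_F=1$) and the factorization $\beta=-4\omega$, the problem reduces to showing $(2,\omega)_{\mathfrak{p}_2}\neq 1$, i.e.\ that $\omega$ is not a norm from $F_{\mathfrak{p}_2}(\sqrt{2})/F_{\mathfrak{p}_2}$. A local analysis of the equation $v^2-2u^2=\omega$, tracking $2$-adic valuations and residue classes in $\mathbb{F}_4$, produces the required parity obstruction. This dyadic Hilbert-symbol computation is the main technical point.

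For the Salem claim, the reduced characteristic polynomial of $f$ over $F$ is $x^2-\tfrac{1-\sqrt{13}}{2}x+1$, whose roots are $a\pm\sqrt{t}$ with $a=(1-\sqrt{13})/4$ and $t=-(1+\sqrt{13})/8$; in particular $\gamma$ is one of them. Multiplying by the $\mathrm{Gal}(F/\mathbb{Q})$-conjugate polynomial $x^2-\tfrac{1+\sqrt{13}}{2}x+1$ gives the minimal polynomial of $\gamma$ over $\mathbb{Q}$: $P(x)=x^4-x^3-x^2-x+1$. Irreducibility follows from $P(\pm 1)\neq 0$ together with a short check ruling out factorizations into monic integer quadratics $(x^2+ax+b)(x^2+cx+d)$. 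Since $P$ is palindromic, the substitution $u=x+1/x$ turns $P(x)=0$ into $u^2-u-3=0$, whose real roots are $u_\pm=(1\pm\sqrt{13})/2$. The branch $u_+$ gives $u_+^2-4>0$, hence a reciprocal pair of real roots $\tau>1>1/\tau$; the branch $u_-$ gives $u_-^2-4<0$, hence a pair of complex-conjugate roots of modulus $1$. Therefore $P$ is a Salem polynomial, and $\gamma$, being one of its complex roots, is an algebraic conjugate of the Salem number $\tau$---the ``complex conjugate of a Salem number'' in the language of the lemma.
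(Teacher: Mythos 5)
Your route differs from the paper's mainly in the skew-field step: the paper simply certifies non-splitness by a Magma computation, exhibits $F(\sqrt 2)$ and $K=\Q\bigl(\sqrt{-2-2\sqrt{13}}\bigr)$ as splitting fields via Lemma~\ref{lem:indef1}, gets total indefiniteness from Lemma~\ref{crit:totdef}, and takes the explicit order $\mathcal O=S\oplus Sj$ ($S$ the ring of integers of $K$), whereas you propose a local--global (Hilbert symbol) argument. Your handling of total indefiniteness, of the order (an integral element lies in some, even maximal, order), and of the Salem polynomial $x^4-x^3-x^2-x+1$ via $u=x+1/x$ and $u^2-u-3$ is correct and more detailed than the paper's; your reduced trace $(1-\sqrt{13})/2$ is also the correct coefficient (the paper's displayed $\tfrac14(1-\sqrt{13})$ is a slip). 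Two caveats, though. First, a logical misstep: Lemma~\ref{lem:indef1} does not give ``$B$ split iff $F(\sqrt2)$ embeds in $B$'' --- the embedding of $F(\sqrt2)$ is automatic (it is exactly how the paper shows $F(\sqrt2)$ is a splitting field) and detects nothing; the criterion you actually need and then use is the norm criterion: $(2,\beta)_F$ is split iff $\beta\in \mathrm{N}_{F(\sqrt2)/F}\bigl(F(\sqrt2)^\times\bigr)$, which is a different (standard) fact.

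Second, and this is a genuine gap: the central claim $(2,\omega)_{\mathfrak p_2}=-1$ is asserted, not proved, and the method you describe --- ``tracking $2$-adic valuations and residue classes in $\mathbb F_4$'' --- cannot yield the obstruction: squaring is surjective on $\mathbb F_4$, so $v^2-2u^2\equiv\omega$ is solvable modulo the maximal ideal, and the valuation bookkeeping only shows that in any exact solution $v$ is a unit and $u$ integral, which is no contradiction. The obstruction lives deeper in the unit filtration: writing $v=a+b\omega$, $u=c+d\omega$ in $\Z_2[\omega]$ with $\omega^2=\omega+3$, the $\omega$-component of $v^2-2u^2\equiv\omega\pmod 8$ forces $b$ odd, and then the constant component gives $a^2+3\equiv 2c^2+6d^2\pmod 8$, i.e.\ $c^2-d^2\equiv 2\pmod 4$, which is impossible; combined with the valuation step this shows $\omega$ is not a norm from $F_{\mathfrak p_2}(\sqrt2)$. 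You should either carry out this mod-$8$ computation or, much more cheaply, ramify at an odd place: since $\mathrm{N}_{F/\Q}(\omega)=-3$ and $3$ splits in $\Q(\sqrt{13})$, the element $\beta=-4\omega$ has odd valuation at one prime $\mathfrak p_3$ above $3$, where the tame symbol is the Legendre symbol of $2$ modulo $3$, namely $-1$; this one line already shows $B$ is ramified, hence a division algebra, and (if desired) the product formula then forces the dyadic ramification as well. With that step supplied, your proof is a valid and arguably more self-contained alternative to the paper's computer verification.
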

\begin{proof} One can use Magma to confirm that $B$ is a skew field, using the following code:

\begin{verbatim}
_<x> := PolynomialRing(Rationals());
K<sqrt13> := NumberField(x^2-13);
B := QuaternionAlgebra(K, 2, -2-2*sqrt13);
IsMatrixRing(B);
\end{verbatim}

By definition $i^2=-2-2\sqrt{13}$ and $j^2=2$ hold. Identifying $i$ with $\sqrt{-2-2\sqrt{13}}$ and $j$ with $\sqrt{2}$, there exist injective $\Q(\sqrt{13})$-algebra homomorphisms $$\iota_1:L:=\Q(\sqrt{13})(\sqrt{2})\hookrightarrow B \quad \textnormal{and} \quad \iota_2:K:=\Q\Big(\sqrt{-2-2\sqrt{13}}\Big)\hookrightarrow B.$$ So the fields $K$ and $L$ define splitting fields for $B$ by Lemma~\ref{lem:indef1}. The image of $2$ is positive under every $\Q$-embedding of $\Q(\sqrt{13})$, hence $B$ is totally indefinite by Lemma~\ref{crit:totdef}.

If we denote by $R$ the ring of integers in $\Q(\sqrt{13})$ and by $S$ the ring of integers in $K$, then $S$ constitutes an $R$-order in $K$. Now $\mathcal{O}:=S\oplus Sj$ becomes an $R$-order in $B$ via $\iota_2$. The number $$\gamma:=\frac{1}{4}(1-\sqrt{13}+\sqrt{-2-2\sqrt{13}}),$$ a complex root of the Salem polynomial $$x^4-x^3-x^2-x+1,$$ is contained in $S$ and corresponds to $$f:=\frac{1}{4}(1-\sqrt{13}+i)$$ via $\iota_2$. Considering the reduced characteristic polynomial of $f$ we get $$\textnormal{N}_{B/\Q(\sqrt{13})}(x-f)=x^2-\textnormal{T}_{B/\Q(\sqrt{13})}(f)x+\textnormal{N}_{B/\Q(\sqrt{13})}(f)$$ $$=x^2-\frac{1}{4}(1-\sqrt{13})x+1=(x-t_1)(x-t_2)$$ with $t_1=\gamma$ and $t_2=\overline{\gamma}$.
\end{proof}

\begin{proposition}\label{prop:totindef} There exists a simple abelian variety with totally indefinite quaternion multiplication whose endomorphism ring contains an element with an eigenvalue of absolute value one which is not a root of unity.
\end{proposition}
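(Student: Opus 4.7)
My strategy is to realize the quaternion algebra $B$ from Lemma~\ref{ex:totindef} as the endomorphism algebra of a simple abelian variety and then apply the matching technique of Proposition~\ref{prop:totdef} to show that the analytic eigenvalues of the element $f$ constructed there include the complex Salem root $\gamma$, which has absolute value one but is not a root of unity.

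First I would invoke Shimura's construction of abelian varieties with prescribed quaternion multiplication (see~\cite[Ch.~9]{BL:CAV}). Using the two real embeddings of $F=\Q(\sqrt{13})$ to fix an isomorphism $B\otimes_\Q\R\simeq\textnormal{M}_2(\R)\oplus\textnormal{M}_2(\R)$, a positive involution on $B$, and the order $\mathcal{O}=S\oplus Sj$ of Lemma~\ref{ex:totindef}, one picks a compatible complex structure and a left $\mathcal{O}$-lattice $\Lambda\subset B\otimes_\Q\R$ to obtain a polarized abelian variety $X=(B\otimes_\Q\R)/\Lambda$ of dimension $g=2e=4$, which satisfies $\mathcal{O}\subseteq\End(X)$; thus $f$ becomes an honest endomorphism of $X$. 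Taking a generic member of the resulting PEL-moduli family ensures $\End_\Q(X)=B$, and since $B$ is a skew field this forces $X$ to be simple.

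Next I would determine the analytic eigenvalues of $f$ on $X$ by repeating the argument of Proposition~\ref{prop:totdef}. With $t_1=\gamma$, $t_2=\overline{\gamma}$ and the four $\Q$-embeddings $\sigma_1,\ldots,\sigma_4$ of $F(\sqrt{t})$ into $\C$, combining the two versions of the Holomorphic Lefschetz formula at every integer $m$ yields
$$\prod_{j=1}^{4}(m-\sigma_j(t_1))\prod_{j=1}^{4}(m-\sigma_j(t_2))=\prod_{i=1}^{4}(m-\lambda_i)(m-\overline{\lambda_i}).$$
Hence the multi-set $\{\lambda_i,\overline{\lambda_i}:i=1,\ldots,4\}$ equals the multi-set $\{\sigma_j(t_1),\sigma_j(t_2):j=1,\ldots,4\}$, which consists of the four roots of the Salem polynomial $x^4-x^3-x^2-x+1$ each with multiplicity two. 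Because the real Salem root $s>1$ and its reciprocal $1/s$ are self-conjugate and must therefore contribute multiplicity exactly one to $\{\lambda_i\}$, a counting argument forces at least one of $\gamma,\overline{\gamma}$ to appear among the $\lambda_i$. Its minimal polynomial over $\Q$ is the Salem polynomial, which is not cyclotomic, so this eigenvalue lies on the unit circle yet is not a root of unity, which is what the proposition demands.

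The main obstacle I anticipate is the first step: rigorously verifying that a lattice and complex structure can be chosen so that the resulting torus is a polarizable abelian variety with $\End_\Q(X)$ exactly equal to $B$ rather than a larger algebra. This is handled by standard PEL-type moduli theory, but one must invoke that the associated moduli space has generic points with endomorphism algebra equal to $B$, and appeal to Albert's classification to rule out any further enlargement compatible with the Rosati involution.
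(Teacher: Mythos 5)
Your proposal is correct and follows essentially the same route as the paper: construct $X$ of dimension $4$ with $\mathcal{O}\subseteq\End(X)$ and $\End_\Q(X)=B$ via the quaternion-multiplication construction in \cite{BL:CAV} (so simplicity follows since $B$ is a skew field), then compare the two versions of the Holomorphic Lefschetz Fixed-Point Formula at all integers to identify the analytic eigenvalues with the conjugates of $t_1=\gamma$, $t_2=\overline{\gamma}$, forcing an eigenvalue equal to $\gamma$ or $\overline{\gamma}$, which lies on the unit circle but is not a root of unity. Your extra counting step with the real Salem root $s$ and $1/s$ is harmless but unnecessary, since $\gamma$ appearing in the multiset $\{\lambda_i\}\cup\{\overline{\lambda_i}\}$ already gives some $\lambda_i\in\{\gamma,\overline{\gamma}\}$.
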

\begin{proof} First we denote $F:=\Q(\sqrt{13})$ and adopt the notation of Lemma~\ref{ex:totindef}. According to~\cite[p. 254 - 257]{BL:CAV} there is an abelian variety $X$ of dimension $4$ with $\mathcal{O}\subseteq \End(X)$. By construction its endomorphism algebra is $B$, and as $B$ is a skew field, $X$ has to be simple.

Considering the endomorphism $f\in\mathcal{O}$, the roots of its reduced characteristic polynomial are $t_1=\gamma$ and $t_2=\overline{\gamma}$, which has been shown in Lemma~\ref{ex:totindef}. As $K$ is a quadratic extension of $F$ and contains $\gamma$ and $\overline{\gamma}$, we get $$(\textnormal{N}_{F/\Q}(1-t_1)(1-t_2))^2=\textnormal{N}_{K/\Q}(1-t_1)(1-t_2).$$ Computing the norm and using both versions of the Fixed-Point Formula we have $$\Big(\prod_{j=1}^41-\sigma_j(t_1)\Big)\Big(\prod_{j=1}^41-\sigma_j(t_2)\Big)=\fix(f)=\prod_{i=1}^4(1-\lambda_i)(1-\overline{\lambda_i}),$$ where $\sigma_1,\ldots,\sigma_4$ are the $\Q$-embeddings of $K$ into $\C$. As in the proof of Proposition~\ref{prop:realmult} we see that $\gamma=t_1$ coincides with an eigenvalue $\lambda_i$. This element is of absolute value one, but not a root of unity.
\end{proof}

\begin{remark} We show here that there are other totally indefinite quaternion algebras with the same properties as the one in Lemma~\ref{ex:totindef} which can be verified in an analogous way. Moreover, we can copy the proof of Proposition~\ref{prop:totindef} and hence get more simple abelian varieties that provide endomorphisms with eigenvalues of absolute value one which are not roots of unity.
\begin{itemize}
\item[\rm(a)]
The totally indefinite quaternion algebra $$\Big(\frac{2,\,94-14\sqrt{61}}{\Q(\sqrt{61})}\Big)$$ has an order with the element $\frac{1}{4}(7-\sqrt{61}+i)$ for $i^2=94-14\sqrt{61}$ which provides the analytic eigenvalue $$\frac{1}{4}(7-\sqrt{61}+\sqrt{94-14\sqrt{61}}),$$ a complex root of the Salem polynomial $x^4-7x-x^2-7x+1$.
\item[\rm(b)]
The totally indefinite quaternion algebra $$\Big(\frac{2,\,10-6\sqrt{17}}{\Q(\sqrt{17})}\Big)$$ defined by $i^2=10-6\sqrt{17}$ contains an order with the element $\frac{1}{4}(3-\sqrt{17}+i)$ which leads to the eigenvalue $$\frac{1}{4}(3-\sqrt{17}+\sqrt{10-6\sqrt{17}}),$$ a complex root of the Salem polynomial $x^4-3x^3-3x+1$.
\end{itemize}
\end{remark}

The next remark shows that the conditions from Proposition~\ref{prop:totdef} do not hold in the totally indefinite case.

\begin{remark}\label{bsp:indef} The endomorphisms $$\frac{1}{4}(1-\sqrt{13}+i),\quad \frac{1}{4}(7-\sqrt{61}+i)$$ and $$\frac{1}{4}(3-\sqrt{17}+i)$$ meet the conditions of Proposition~\ref{prop:totdef}, but they have an exponentially growing fixed-point function, because one eigenvalue is a Salem number, an element of absolute value bigger than $1$.
\end{remark}

\begin{proof}[Proof of Theorem~\ref{thm:abelian}] The first part of Theorem~\ref{thm:abelian} follows by Proposition~\ref{prop:realmult}, Proposition~\ref{prop:CM} and Proposition~\ref{prop:totdef}. The second part is shown in Proposition~\ref{prop:totindef}.
\end{proof}

\section{Entropy of endomorphisms on simple abelian varieties}\label{entrop}

The entropy of a surjective holomorphic endomorphism $f$ on a compact K\"ahler manifold can be computed as the natural logarithm of the spectral radius of the action on the cohomology induced by $f$ (see \cite{Fr:entropy}, \cite{Gromov:entropy-holom} and \cite{Youndin}).
Note that if $X$ is a simple abelian variety, then every non-zero endomorphism is surjective.

Now we characterize the algebraic structure of $\gamma$, where $\log(\gamma)$ is the entropy of an endomorphism. The next proposition proves Theorem~\ref{structure}:

\begin{proposition}\label{prop:ent} Let $X$ be a simple $n$-dimensional abelian variety with real, totally definite quaternion or complex multiplication and let $f$ be an endomorphism of entropy $\log(\gamma)$. Let $K$ be the field generated by the analytic eigenvalues of $f$. Then $\gamma$ is contained in the maximal totally real subfield $L$ of $K$.

More precisely, the normal closure of the maximal totally real subfield of $\End_\Q(X)$ contains $L$.
\end{proposition}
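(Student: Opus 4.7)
The plan is to combine the entropy formula with the explicit descriptions of the analytic eigenvalues obtained in Propositions~\ref{prop:realmult}, \ref{prop:CM} and \ref{prop:totdef}. The first step is to write $\gamma$ concretely in terms of $\lambda_1,\dots,\lambda_n$. Since $X$ is a complex torus, the canonical identification $\textnormal{H}^{j,j}(X)\simeq\Lambda^j\textnormal{H}^{1,0}(X)\otimes\Lambda^j\textnormal{H}^{0,1}(X)$ gives the eigenvalues of $f^*$ on $\textnormal{H}^{j,j}(X)$ as the products $\lambda_{i_1}\cdots\lambda_{i_j}\,\overline{\lambda_{k_1}\cdots\lambda_{k_j}}$. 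Ordering $|\lambda_1|\ge\cdots\ge|\lambda_n|$, maximising the absolute value forces both index sets to equal $\{1,\dots,j\}$, and maximising over $j$ then yields
\[
\gamma\;=\;\prod_{|\lambda_i|\ge 1}|\lambda_i|^2\;=\;\prod_{|\lambda_i|\ge 1}\lambda_i\overline{\lambda_i}.
\]

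The first assertion then follows almost immediately. In each of the three cases, Propositions~\ref{prop:realmult}, \ref{prop:CM} and \ref{prop:totdef} embed $K$ into a field that is either totally real or a CM-field, and hence stable under complex conjugation. Therefore $\overline{\lambda_i}\in K$ for every $i$, and each factor $\lambda_i\overline{\lambda_i}$ is a nonnegative real element of $K$, so it lies in the maximal totally real subfield $L$. Consequently $\gamma\in L$.

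For the refined statement I would locate each factor $\lambda_i\overline{\lambda_i}$ inside $M$ directly by a norm calculation. In the real case the $\lambda_i$ are conjugates $\sigma_j(f)$, so $|\lambda_i|^2=\sigma_j(f^2)$ lies in the normal closure of $\Q(f)\subseteq F_0$ and hence in $M$. In the CM case the eigenvalues have the form $\lambda_i=\phi_i(f)$ for embeddings $\phi_i:\Q(f)\hookrightarrow\C$ belonging to a CM-type, and
\[
\lambda_i\overline{\lambda_i}\;=\;\phi_i(f\cdot\overline f)\;=\;\phi_i\bigl(\N_{\Q(f)/\Q(f)_0}(f)\bigr)
\]
is a conjugate of an element of $\Q(f)_0\subseteq F_0$ and therefore lies in $M$. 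The quaternion case is analogous: writing $\lambda_i=\sigma_i(t_1)$ with $t_1,t_2$ the roots of the reduced characteristic polynomial, one obtains $\lambda_i\overline{\lambda_i}=\sigma_i(t_1 t_2)=\sigma_i(\N_{D/F}(f))$, again a conjugate of an element of the centre $F=F_0$.

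The main obstacle I anticipate is upgrading "$\gamma\in M$" to the stated "$L\subseteq M$". The cleanest route is to let $\tilde E$ denote the normal closure of $\Q(f)$ (respectively of $F(\sqrt{t})$). Then $\tilde E$ is a CM-field containing $K$ by Lemma~\ref{lem:CM}, complex conjugation is a central element $g$ of $\textnormal{Gal}(\tilde E/\Q)$ by Lemma~\ref{rem:CM}, and its fixed field $\tilde E^{\langle g\rangle}$ is therefore Galois over $\Q$, totally real, and contains $F_0$, so $L\subseteq\tilde E^{\langle g\rangle}\supseteq M$. The delicate point is the reverse inclusion $\tilde E^{\langle g\rangle}\subseteq M$, which, using the presentation $\tilde E=M\bigl(\sqrt{-\sigma_1(d)},\dots,\sqrt{-\sigma_k(d)}\bigr)$ coming from the CM- or quaternion-structure, reduces to showing that the products $\sigma_i(d)\sigma_j(d)$ are squares in $M$; this is the step where the specific form of the endomorphism algebra supplied by Albert's classification has to be exploited.
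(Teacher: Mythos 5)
Your treatment of the first assertion is essentially the paper's own argument: the same identification of the eigenvalues of $f^*$ on $\textnormal{H}^{j,j}(X)$ as products of analytic eigenvalues and their conjugates, the same formula $\gamma=\prod_{|\lambda_i|\ge 1}|\lambda_i|^2$, and the same observation that each $\lambda_i\overline{\lambda_i}$ is conjugation-invariant and hence lies in $L$ (to make that last step airtight, note via Lemma~\ref{lem:CM}(b) that $K$ itself is totally real or a CM-field, so an element of $K$ that is real under the chosen embedding is fixed by the conjugation automorphism and is therefore totally real). Your norm identities --- each $|\lambda_i|^2$ is a conjugate of the relative norm of $f$ to the maximal totally real subfield of $\Q(f)$, resp.\ a conjugate of $\N_{D/F}(f)$ --- are likewise exactly the content of the paper's closing lines (``all $|\lambda_j|^2$ are in the normal closure of $F$''), and they establish $\gamma\in M$, where $M$ is the normal closure of the maximal totally real subfield of $\End_\Q(X)$; this is precisely what Theorem~\ref{structure} extracts from the proposition.

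The step you flag as the main obstacle, upgrading $\gamma\in M$ to $L\subseteq M$, is indeed left open in your proposal, but you should be aware that the paper's proof does not close it either: it passes directly from ``each $|\lambda_j|^2$ lies in $M$'' to the refined clause. Moreover, the route you sketch cannot be completed, because the inclusion $\tilde E^{\langle g\rangle}\subseteq M$ fails in general. For instance, if $\End_\Q(X)$ is a non-Galois quartic CM-field whose normal closure has Galois group $D_4$ and $f$ generates it, the two analytic eigenvalues already generate the whole normal closure, so $\tilde E^{\langle g\rangle}$ (and in fact $L$) has degree $4$ over $\Q$, while $M$ is the real quadratic subfield, which is already normal; hence your products $\sigma_i(d)\sigma_j(d)$ cannot all be squares in $M$, and even the literal inclusion $L\subseteq M$ is problematic in such cases. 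The defensible form of the second clause --- and the only statement used later in the paper --- is the one your norm computation delivers: $\gamma\in M$ (equivalently, the subfield generated by the numbers $|\lambda_i|^2$ lies in $M$; if one wants a statement about $L$, one can only say that $L$ is contained in the maximal totally real subfield of the normal closure of $\End_\Q(X)$). So do not spend effort trying to prove the reduction to squares; state and prove the norm version instead.
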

\begin{proof} The entropy of an endomorphism $f$ on $X$ is $$\max_{1\leq j\leq n}\log\rho(f^*:\textnormal{H}^{j,j}(X)\rightarrow \textnormal{H}^{j,j}(X))=\log(\gamma),$$ where $\rho$ stands for the spectral radius. By the proofs of Proposition~\ref{prop:totdef}, Proposition~\ref{prop:CM} and Proposition~\ref{prop:realmult} we know that either all eigenvalues $\lambda_i$ are real or complex. The eigenvalues of $f^*$ on $\textnormal{H}^{j,j}(X)$ equal a product of $2j$ pairwise distinct rational eigenvalues. ( Let $\mu_1,\ldots,\mu_{2n}$ be the rational eigenvalues. By distinct we mean that none of these eigenvalues occurs twice in a product, but $\mu_j=\mu_i$ is still possible for $j\neq i$.)

For $X$ having real multiplication our assertion follows directly, as all eigenvalues are real as roots of the minimal polynomial of $f$.

Consider $X$ to have totally definite quaternion or complex multiplication. Let $\lambda_1,\ldots,\lambda_i$ be all analytic eigenvalues of absolute value bigger than one. Then we get $$\gamma=|\lambda_1|^2\cdot\ldots\cdot|\lambda_i|^2.$$
As all $\lambda_j$ lie in a CM-field, which is closed under complex multiplication, every product $\lambda_j\cdot\overline{\lambda_j}=|\lambda_j|^2$ is contained in the maximal totally real subfield. If $X$ has complex multiplication, then we are done. If $X$ has totally definite quaternion multiplication by $\Big(\frac{\alpha,\beta}{F}\Big)$, then we know by the proof of Proposition~\ref{prop:totdef} that all $\lambda_j$ lie in the normal closure of the CM-field $F(\sqrt{t})$ which is again a CM-field. Hence all $|\lambda_j|^2$ are in the normal closure of $F$.
\end{proof}

\begin{remark} If $X$ is a simple abelian variety with totally definite quaternion multiplication in $B=\Big(\frac{\alpha,\beta}{F}\Big)$, then $F$ is the maximal totally real subfield. Indeed, if there exists a real quadratic extension $K$ of $F$ contained in $B$, then $K$  defines a splitting field for $B$ by Lemma~\ref{lem:indef1}. But the existence of such a field is a contradiction to the totally definiteness of $B$.
\end{remark}

As a number field $\Q(\alpha)$ defined by a Salem number $\alpha$ cannot be totally real, we immediately get:

\begin{corollary} Let $X$ be a simple $n$-dimensional abelian variety with real, totally definite quaternion or complex multiplication and let $f$ be an endomorphism of entropy $\log(\gamma)$. Then $\gamma$ is not a Salem number.
\end{corollary}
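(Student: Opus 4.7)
The plan is to derive this directly from Proposition~\ref{prop:ent} together with the defining property of Salem numbers, so almost no new work is needed beyond articulating what has already been collected.

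First I would invoke Proposition~\ref{prop:ent}, which places $\gamma$ in the normal closure $N$ of the maximal totally real subfield $F_0$ of $\End_\Q(X)$. The next step is to observe that $N$ is itself totally real. This follows by the same argument used in Lemma~\ref{lemma:subfieldtotreal}: writing $F_0=\Q(\alpha)$, the splitting field of the minimal polynomial of $\alpha$ is precisely $N$, and it is generated over $\Q$ by roots that are all real by assumption on $F_0$; since $N$ is Galois, it is totally real. Consequently $\Q(\gamma)\subseteq N$ is a totally real number field.

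Now I would recall the arithmetic definition of a Salem number: an algebraic integer $\gamma>1$ whose minimal polynomial $\mu$ is reciprocal, has exactly two real roots (namely $\gamma$ and $1/\gamma$) and has all remaining roots on the unit circle in complex conjugate pairs. In particular $\mu$ has degree at least $4$ and admits non-real roots, so $\Q(\gamma)$ possesses a $\Q$-embedding into $\C$ whose image is not contained in $\R$, i.e., $\Q(\gamma)$ is not totally real.

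The two observations clash: $\Q(\gamma)$ is totally real by the first step, but would not be totally real if $\gamma$ were a Salem number. Hence $\gamma$ cannot be a Salem number. The only point that warrants a sentence of care is the passage from ``$\gamma$ lies in a totally real field'' to ``$\Q(\gamma)$ is totally real'', which follows because every $\Q$-embedding of $\Q(\gamma)$ extends to one of the enclosing totally real field $N$ and therefore has image in $\R$. There is no real obstacle here; the statement is genuinely a corollary whose content is packaged entirely by Proposition~\ref{prop:ent}.
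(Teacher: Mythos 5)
Your argument is correct and is essentially the paper's own proof: the paper deduces the corollary in one line from Proposition~\ref{prop:ent} together with the observation that a Salem number cannot generate a totally real field. The only (harmless) difference is that you route through the normal closure of the maximal totally real subfield of $\End_\Q(X)$, whereas the first assertion of Proposition~\ref{prop:ent} already places $\gamma$ in the totally real field $L$, so with Lemma~\ref{lemma:subfieldtotreal} the conclusion that $\Q(\gamma)$ is totally real is immediate.
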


Using our knowledge about the analytic eigenvalues we are now able to determine which endomorphisms are of positive entropy and which of zero.

\begin{corollary}\label{cor1:entropy}
Let $X$ be a simple abelian variety.
\begin{itemize}
\item[\rm(a)] Let $X$ have real or complex multiplication and let $f$ be an endomorphism of $X$. Then:

If $f$ is of absolute value $1$, then $f$ has zero entropy, and $f$ has positive entropy otherwise.
\item[\rm(b)] Let $X$ have totally definite quaternion multiplication $\Big(\frac{\alpha,\beta}{F}\Big)$ and let $f=a+bi+cj+dij$ be an endomorphism of $X$. Then:

If $|a+\sqrt{b^2\alpha+c^2\beta-d^2\alpha\beta}|$ is $1$, then $f$ has zero entropy, and $f$ has positive entropy otherwise.
\end{itemize}
\end{corollary}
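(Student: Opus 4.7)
The plan is to deduce the corollary directly from Theorem~\ref{thm:abelian} (more precisely from Propositions~\ref{prop:realmult}, \ref{prop:CM} and \ref{prop:totdef}) combined with the entropy formula recalled at the beginning of Section~\ref{entrop}. Since $f$ is non-zero and $X$ is simple, $f$ is surjective, so the entropy equals $\max_{1\le j\le n}\log\rho(f^*:H^{j,j}(X)\to H^{j,j}(X))$. For an abelian variety, $H^{j,j}(X)=\Lambda^{j}V^{*}\otimes\Lambda^{j}\overline{V}^{*}$ with $V$ the tangent space, so the eigenvalues of $f^{*}$ on $H^{j,j}(X)$ are precisely the products $\prod_{i\in I}\lambda_{i}\prod_{k\in K}\overline{\lambda_{k}}$ with $|I|=|K|=j$, where $\lambda_{1},\dots,\lambda_{n}$ are the analytic eigenvalues of $f$.

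First I would handle the periodic case. Under the assumption $|f|=1$ in~(a), respectively $|a+\sqrt{b^{2}\alpha+c^{2}\beta-d^{2}\alpha\beta}|=1$ in~(b), Propositions~\ref{prop:realmult}, \ref{prop:CM} and \ref{prop:totdef} assert that every analytic eigenvalue $\lambda_{i}$ is a root of unity. Consequently every product $\prod_{i\in I}\lambda_{i}\prod_{k\in K}\overline{\lambda_{k}}$ has absolute value $1$, so $\rho(f^{*}|_{H^{j,j}(X)})=1$ for every $j$, and the entropy is $\log 1=0$.

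Next I would treat the non-periodic case. Under the opposite hypothesis, the same three propositions give that no $\lambda_{i}$ has absolute value $1$; in particular $|\lambda_{i}|\ne 1$ for every $i$. Because $f$ is a non-zero endomorphism of $X$, the analytic degree $\deg(f)=\prod_{i=1}^{n}|\lambda_{i}|^{2}$ is a positive integer, hence $\ge 1$. If every $|\lambda_{i}|$ were strictly less than $1$, this product would be strictly less than $1$, a contradiction; therefore at least one $\lambda_{i}$ has $|\lambda_{i}|>1$. Letting $I$ be the set of indices with $|\lambda_{i}|>1$ and taking $j=|I|$ together with $K=I$, the corresponding eigenvalue of $f^{*}$ on $H^{j,j}(X)$ is $\prod_{i\in I}|\lambda_{i}|^{2}>1$, so $\rho(f^{*}|_{H^{j,j}(X)})>1$ and the entropy is strictly positive.

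I do not expect a genuine obstacle here: the only point that requires a small additional argument beyond quoting Theorem~\ref{thm:abelian} and the Gromov--Friedland--Yomdin formula is the observation that exponential fixed-point growth forces some eigenvalue to have modulus $>1$, rather than merely $\ne 1$; this is taken care of by the degree inequality $\prod|\lambda_{i}|^{2}\ge 1$ for a non-zero endomorphism of a simple abelian variety.
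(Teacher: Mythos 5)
Your proposal is correct and follows essentially the same route as the paper: invoke Propositions~\ref{prop:realmult}, \ref{prop:CM} and \ref{prop:totdef} to get all analytic eigenvalues of modulus $1$ (zero entropy) in the first case, and some eigenvalue of modulus $>1$ (positive entropy) in the second, via the Gromov--Friedland--Yomdin formula. The only difference is that you explicitly justify the step from ``no eigenvalue of modulus $1$'' to ``some eigenvalue of modulus $>1$'' using $\deg(f)=\prod_i|\lambda_i|^2\ge 1$, a point the paper simply asserts; this is a welcome (and correct) addition rather than a deviation.
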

\begin{proof} In view of Proposition~\ref{prop:realmult}, Proposition~\ref{prop:CM} and Proposition~\ref{prop:totdef} and considering $|f|=1$ or $|a+\sqrt{b^2\alpha+c^2\beta-d^2\alpha\beta}|=1$, every analytic eigenvalue has to be of absolute value $1$ and zero entropy follows. 

If we consider $|f|\neq 1$ or $|a+\sqrt{b^2\alpha+c^2\beta-d^2\alpha\beta}|\neq 1$, then the same propositions tell us that at least one analytic eigenvalue is of absolute value bigger than $1$ which gives rise to positive entropy.
\end{proof}

Now the question is whether there exist automorphisms of positive entropy:

\begin{corollary} There exist simple abelian varieties with real, totally definite quaternion and complex multiplication which have automorphisms of positive entropy.
\end{corollary}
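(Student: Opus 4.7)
The plan is to reduce the statement to Corollary~\ref{cor1:entropy} by exhibiting, in each of the three cases (real multiplication, complex multiplication, totally definite quaternion multiplication), a simple abelian variety together with an automorphism whose analytic absolute value is different from $1$.

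The key observation is that if $\End(X)$ contains the ring of integers $\mathcal{O}_F$ of a totally real subfield $F$ of $\End_\Q(X)$, then any unit of $\mathcal{O}_F$ is automatically a unit of $\End(X)$ and hence an automorphism of $X$. Taking $F=\Q(\sqrt{2})$, the fundamental unit $1+\sqrt{2}$ (with inverse $-1+\sqrt{2}$) is such a unit, and under the identity embedding it has absolute value $1+\sqrt{2}\neq 1$. This single element will serve as witness in all three cases: for real and complex multiplication the condition of Corollary~\ref{cor1:entropy}(a) is violated directly, while in the quaternion case the decomposition $1+\sqrt{2}=a+bi+cj+dij$ has $a=1+\sqrt{2}$ and $b=c=d=0$, so the relevant quantity is $|a+\sqrt{b^2\alpha+c^2\beta-d^2\alpha\beta}|=|1+\sqrt{2}|\neq 1$, triggering Corollary~\ref{cor1:entropy}(b).

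The remaining ingredient is existence. I would invoke the classical constructions in~\cite{BL:CAV} producing, for each Albert type under consideration and under the appropriate dimensional constraints, simple abelian varieties with the prescribed endomorphism algebra. Concretely one needs: a simple abelian surface with real multiplication by $\Q(\sqrt{2})$ (as already mentioned in the remark following Proposition~\ref{prop:totdef}); a simple abelian variety with complex multiplication by a CM-field whose maximal totally real subfield contains $\Q(\sqrt{2})$, for instance $\Q(\sqrt{2},\sqrt{-1})$; and a simple abelian variety with totally definite quaternion multiplication by a quaternion division algebra $B$ over $\Q(\sqrt{2})$, which exists by Lemma~\ref{crit:totdef} upon choosing $\alpha,\beta$ with $\sigma(\alpha),\sigma(\beta)<0$ at both embeddings of $\Q(\sqrt{2})$. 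The main (and essentially only) obstacle worth noting is invoking these existence results cleanly; once that is in hand, Corollary~\ref{cor1:entropy} applied to the automorphism $1+\sqrt{2}$ immediately yields positive entropy in all three cases.
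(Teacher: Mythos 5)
Your proposal is correct and takes essentially the same route as the paper: the paper's proof also reduces to units of infinite order via Dirichlet's unit theorem and uses the very same witness $1+\sqrt{2}$ in $\Q(\sqrt{2})$, combined with Corollary~\ref{cor1:entropy}. You are in fact slightly more explicit than the paper in handling the totally definite quaternion case (through a unit of the totally real center) and in invoking the existence constructions from~\cite{BL:CAV} for the underlying simple abelian varieties.
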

\begin{proof} Let $F$ be a totally real number field of degree $r$ or a CM-field of degree $2r$. Then according to Dirichlet's unit theorem (see~\cite[p. 39 - 44]{Neukirch:AZT}) every maximal order $\mathcal{O}$ in $F$ has exactly $r-1$ fundamental units $\epsilon_1,\ldots,\epsilon_{r-1}$, such that any unit $\epsilon$ can be written uniquely as $$\epsilon=\zeta\cdot\epsilon_1^{m_1}\cdot\ldots\cdot\epsilon_{r-1}^{m_{r-1}}$$ with a root of unity $\zeta$ and integers $m_i$. For $r>1$ there are obviously units of absolute value bigger than $1$ which yield automorphisms of positive entropy, e.g., $1+\sqrt{2}$ in the real quadratic number field $\Q(\sqrt{2})$.
\end{proof}

In view of Proposition~\ref{prop:totindef} we know that there exist endomorphisms of simple abelian varieties whose analytic eigenvalues are roots of a Salem polynomial. The following corollary shows that such endomorphisms are automorphisms and lead to a new type of positive entropy.

\begin{corollary}\label{end} There exist simple abelian varieties with totally indefinite quaternion multiplication which provide automorphisms with positive entropy $\log(\gamma)$, where $\gamma$ is a Salem number.
\end{corollary}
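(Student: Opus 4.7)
The plan is to reuse the simple abelian variety $X$ and the endomorphism $f$ constructed in Proposition~\ref{prop:totindef}. Recall that $f$ has among its analytic eigenvalues the element $\gamma_0=\tfrac{1}{4}(1-\sqrt{13}+\sqrt{-2-2\sqrt{13}})$, a non-real root of the Salem polynomial $S(x)=x^4-x^3-x^2-x+1$. Write $\tau>1$ for the Salem number (the unique real root $>1$ of $S$) and $\bar\gamma_0$ for the fourth root, so the roots of $S$ are $\tau,\,1/\tau,\,\gamma_0,\,\bar\gamma_0$. I will (i) identify all four analytic eigenvalues of $f$, (ii) show $f$ is an automorphism, (iii) compute the entropy of $f$ using $\max_{1\le j\le 4}\log\rho(f^*\colon\textnormal{H}^{j,j}(X)\to\textnormal{H}^{j,j}(X))$, and (iv) verify that the resulting number is the logarithm of a Salem number.

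For (i), $\rho_r(f)$ has degree $8=2\dim X$ and, since $f$ is a root of the irreducible polynomial $S$, its minimal polynomial is $S$; hence its characteristic polynomial is $S^2$, and each element of $\{\tau,1/\tau,\gamma_0,\bar\gamma_0\}$ occurs with multiplicity two in the rational spectrum. Since $\{\lambda_i\}\cup\{\bar\lambda_i\}$ reproduces this multiset and since $\tau,1/\tau$ are real while $\gamma_0,\bar\gamma_0$ form a complex conjugate pair, it follows that $\{\lambda_1,\ldots,\lambda_4\}=\{\tau,1/\tau,\gamma_0,\bar\gamma_0\}$. For (ii), $S$ is monic with constant term $1$, so $\gamma_0$ is a unit in the ring of integers of $K=\Q(\sqrt{-2-2\sqrt{13}})$; hence $f$ is a unit in the order $\mathcal{O}\subseteq\End(X)$ and therefore an automorphism of $X$.

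For (iii), the eigenvalues of $f^*$ on $\textnormal{H}^{j,j}(X)=\Lambda^j\Omega^1(X)\otimes\Lambda^j\overline{\Omega^1(X)}$ are the products $\prod_{s\in I}\lambda_s\cdot\prod_{t\in J}\bar\lambda_t$ indexed by $j$-subsets $I,J\subseteq\{1,\ldots,4\}$. With $\{|\lambda_i|\}=\{\tau,1/\tau,1,1\}$, a short case analysis shows the spectral radius equals $\tau^2$ for $j\in\{1,2,3\}$ and $1$ for $j=4$; thus the entropy of $f$ equals $\log(\tau^2)$. For (iv), $\tau^2$ is itself a Salem number: its $\Q$-conjugates are $\tau^2,1/\tau^2,\gamma_0^2,\bar\gamma_0^2$, which are pairwise distinct because $\tau^2>1>1/\tau^2$ have moduli different from $1=|\gamma_0^2|$, while $\gamma_0^2=\bar\gamma_0^2$ would force $\gamma_0^4=1$ and so make $\gamma_0$ a root of unity, contradicting that its minimal polynomial is the non-cyclotomic polynomial $S$. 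Hence $\tau^2>1$ is the unique conjugate outside the unit circle, with reciprocal $1/\tau^2$ and two further conjugates on the unit circle, so $\gamma:=\tau^2$ is a Salem number and the entropy is $\log(\gamma)$, as required.

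The main obstacle is the bookkeeping in step (iii): for each $j$ one must check that no eigenvalue of $f^*$ on $\textnormal{H}^{j,j}$ has modulus larger than $\tau^2$. With only two $|\lambda_i|$'s different from $1$ and the relation $\tau\cdot 1/\tau=1$, the enumeration is short — for $j\le 3$ the maximum of $\prod_{i\in I}|\lambda_i|$ equals $\tau$ (realised by including the index of $\tau$ and excluding the index of $1/\tau$), and doubling with the conjugate factor gives $\tau^2$; for $j=4$ the only product is $1$.
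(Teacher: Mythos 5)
Your proof is correct and, in outline, it is the paper's proof: both reuse the fourfold $X$ and the endomorphism $f$ from Proposition~\ref{prop:totindef}, observe that $f$ is an automorphism, identify the entropy as $\log(\tau^2)$, and conclude because the square of a Salem number is again a Salem number. The differences lie only in how the steps are justified, and your version is somewhat more self-contained: the paper gets invertibility from $f\cdot f'=1$ (reduced norm one), while you use the equivalent fact that the Salem polynomial has constant term $1$, so $f^{-1}$ already lies in the order $\mathcal{O}\subseteq\End(X)$; the paper computes the entropy by citing the formula from the proof of Proposition~\ref{prop:ent} (whose statement is restricted to the real, CM and totally definite quaternion cases), whereas you rederive it for this example by noting that the characteristic polynomial of $\rho_r(f)$ is the square of the Salem polynomial and then doing the $\textnormal{H}^{j,j}$ bookkeeping directly, which avoids any question of applying Proposition~\ref{prop:ent} outside its stated hypotheses; and you verify by hand that $\tau^2$ is a Salem number where the paper cites the standard fact. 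One small overstatement, immaterial to the argument: your multiset argument in step (i) forces only that $\tau$ and $1/\tau$ each occur exactly once among the analytic eigenvalues and that the remaining two have modulus $1$ (a priori they could be $\gamma_0$ with multiplicity two rather than the pair $\{\gamma_0,\bar\gamma_0\}$); since the entropy computation uses only the moduli $\{\tau,1/\tau,1,1\}$, which are forced, the conclusion is unaffected.
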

\begin{proof}The endomorphisms in Proposition~\ref{prop:totindef} or Remark~\ref{bsp:indef} are automorphism, because they are inverse to the quaternion conjugation of themselves. Their eigenvalues are always roots of a Salem polynomial. Via the formula in the proof of Proposition~\ref{prop:ent} the entropy is given by the logarithm of the product of the rational eigenvalues of absolute value bigger than $1$. In our situation the only analytic eigenvalue of absolute value bigger than $1$ is a Salem number, hence the entropy is the logarithm of its square. As the square of a Salem number is again a Salem number, we are done.
\end{proof}

\begin{proof}[Proof of Theorem~\ref{thm:entropy}] The first statement is proved by Corollary~\ref{cor1:entropy} and the second by Corollary~\ref{end}.
\end{proof}





\footnotesize
   
   \bigskip
   Thorsten Herrig,
   Fachbereich Mathematik und Informatik,
   Philipps-Universit\"at Marburg,
   Hans-Meerwein-Stra\ss e,
   D-35032 Marburg, Germany.

   \nopagebreak
   \textit{E-mail address:} \texttt{herrig106@mathematik.uni-marburg.de}


\end{document}